\documentclass[]{amsart} 
\usepackage{amssymb}
\usepackage[latin1]{inputenc}
\usepackage{amsgen,amsmath,amsxtra,amssymb,amsfonts,amsthm}
\usepackage{latexsym}
\usepackage{amsmath}
\usepackage{amsthm}
\usepackage{amscd}

\usepackage{amssymb}
\usepackage{amsfonts}

\usepackage{color}
\definecolor{marron}{rgb}{0.25,0,0}

\definecolor{verdenormal}{rgb}{0,0.5,0}

\long\def\salta#1{\relax}

\def\dys{\displaystyle}
\def\vp{\varphi}

\newtheorem{theorem}{Theorem}[section]
\newtheorem{lemma}[theorem]{Lemma}
\newtheorem{proposition}[theorem]{Proposition}

\theoremstyle{definition}
\newtheorem{remark}[theorem]{Remark}

\newtheorem{definition}[theorem]{Definition}







\def\huz{H^1_0 (\Omega)}
\def\dys{\displaystyle}
\def\rife#1{(\ref{#1})}

\def\vp{\varphi}

\newcommand{\na}{\mathbb{N}}

\newcommand{\rn}{\mathbb{R}^{N}}


\catcode`\@=11

\def\@ceqnnum{{\reset@font\rm (\tempequation)}}

\def\@ceqncr{{\ifnum0=`}\fi\@ifstar{\global\@eqpen\@M
 \@cyeqncr}{\global\@eqpen\interdisplaylinepenalty \@cyeqncr}}

\def\@cyeqncr{\@ifnextchar [{\@cxeqncr}{\@cxeqncr[\z@]}}

\def\@cxeqncr[#1]{\ifnum0=`{\fi}\@@ceqncr
 \noalign{\penalty\@eqpen\vskip\jot\vskip #1\relax}}

\def\@@ceqncr{\let\@tempa\relax
 \ifcase\@eqcnt \def\@tempa{& & &}\or \def\@tempa{& &}%
 \else \def\@tempa{&}\fi
 \@tempa \if@eqnsw\@ceqnnum\fi
 \global\@eqnswtrue\global\@eqcnt\z@\cr}

\def\clabel#1#2{\global\let\tempequation #2
 \@bsphack\if@filesw {\let\thepage\relax
 \def\protect{\noexpand\noexpand\noexpand}%
 \edef\@tempa{\write\@auxout{\string
 \newlabel{#1}{{#2}{\thepage}}}}%
 \expandafter}\@tempa
 \if@nobreak \ifvmode\nobreak\fi\fi\fi\@esphack}

\def\ceqnarray{
\global\@eqnswtrue\m@th
\global\@eqcnt\z@\tabskip\@centering\let\\\@ceqncr
$$\halign to\displaywidth\bgroup\@eqnsel\hskip\@centering
 $\displaystyle\tabskip\z@{{}##}$&\global\@eqcnt\@ne
 \hskip 2\arraycolsep \hfil${{}##}$\hfil
 &\global\@eqcnt\tw@ \hskip 2\arraycolsep
$\displaystyle\tabskip\z@{{}##}$\hfil
 \tabskip\@centering&\llap{##}\tabskip\z@\cr}

\def\endceqnarray{\@@ceqncr\egroup$$\global\@ignoretrue}

\catcode`\@=12


\def\vp{\varphi}
\def\be{\begin{equation}}
\def\ee{\end{equation}}
\def\rife#1{(\ref{#1})}

\def\lp{L^{p}(\Omega)}

\def\vare{\varepsilon}

\def\t1p0{T^{1,p}_{0}(\Omega)}

\def\capp{{\rm cap}_{p}}
\def\m2{M^{\frac{N(p-1)}{N-1}}(\Omega)}

\def\div{\text{\rm div}}

\def\into{\int_{\Omega}}
\def\w-1p'{W^{-1,p'}(\Omega)}

\def\dys{\displaystyle}

\def\lp'n{(L^{p'}(\Omega))^{N}}

\author[L. Orsina]{Luigi Orsina}
\email{orsina@mat.uniroma1.it}
\author[F. Petitta]{Francesco Petitta}
\email{francesco.petitta@sbai.uniroma1.it}

\address[L. Orsina]{Dipartimento di Matematica, 
``Sapienza", Universit\`a di Roma, Piazzale A. Moro 2, 00185 Roma, Italy}
\address[F. Petitta]{Dipartimento di Scienze di Base e Applicate
per l' Ingegneria, ``Sapienza", Universit\`a di Roma, Via Scarpa 16, 00161 Roma, Italy.}

\keywords{Nonlinear elliptic equations, Singular elliptic equations, Measure data} \subjclass[2000]{35J60, 35J61, 35J75, 35R06}

\begin{document}
\title{A Lazer-McKenna type problem with measures}

\begin{abstract}
In this paper we are concerned with a general singular Dirichlet boundary value problem whose model is the following
$$
\begin{cases}
 \dys -\Delta u = \frac{\mu}{u^{\gamma}} & \text{in}\ \Omega,\\
 u=0 &\text{on}\ \partial\Omega,\\
 u>0 &\text{on}\ \Omega\,.
 \end{cases}
$$
Here $\mu$ is a nonnegative bounded Radon measure on a bounded open set $\Omega\subset\rn$, and $\gamma>0$. 
\end{abstract}

\maketitle

\tableofcontents

\section{Introduction}
We deal with a general singular Dirichlet boundary value problem whose model is the following
\begin{equation}\label{1}
\begin{cases}
 \dys -\Delta u = \frac{\mu}{u^{\gamma}} & \text{in}\ \Omega,\\
 u=0 &\text{on}\ \partial\Omega,\\
 u>0 &\text{on}\ \Omega,
 \end{cases}
\end{equation}
where $\mu$ is a nonnegative bounded Radon measure on a bounded open set $\Omega\subset\rn$, $N\geq 2$, and $\gamma>0$. 

Problems as in \rife{1} have been extensively studied both for their pure mathematical interest (see \cite{crt} and \cite{lm}) and for relevant connections with some physical phenomena (see \cite{nc}). 

In particular, if $\Omega$ is smooth and $\mu$ is an H\"older continuous function that is strictly positive in $\overline{\Omega}$ then there exists a unique classical solution $u\in C^{2+\alpha}(\Omega)\cap C(\overline{\Omega})$ to problem \rife{1}. However, this solution fails to have finite energy (i.e $u$ belongs to $\huz$) if $\gamma\geq 3$ and it is not $C^{1}(\overline{\Omega})$ if $\gamma>1$ (see \cite{lm,crt}). See also the recent paper \cite{yd} for further details on the role of the exponent $\gamma=3$.

In \cite{bo} the existence of a distributional solution for problem \rife{1} is proved if $\mu$ belongs to some Lebesgue space $L^{m}(\Omega)$ with $m\geq 1$. Moreover, if $\gamma\geq 1$ the solution belongs, at least locally, to $H^{1}(\Omega)$, while for $\gamma<1$ it turns out to be a locally infinite energy solution though it belongs to some larger Sobolev space. 
In the same paper the authors also show that if $\mu$ is singular with respect to some suitable capacity then no solutions are expected to exists at least in the sense of approximating problems. The result is also extended to more general linear operator having a principal part in divergence form. Finally, the case of nonhomogeneous problems is treated in \cite{olpe}.

In this paper we deal with problems as \rife{1} where $\mu$ is a bounded Radon measure which is diffuse with respect to a suitable capacity (depending on the value of $\gamma$). We stress that our results turn out to be sharp with respect to the nonexistence result in \cite{bo} (see Remark \ref{optimal} below). 

The key point in the proof of \cite{bo} was to choose a suitable nondecreasing sequence of approximating solutions in order to get a uniform bound from below for the solution. This allows to give meaning, at least locally, to the right hand side of the equation. When dealing with measures, as pointed out by the same authors, the main difficulty (but not the only one) in order to prove existence of solutions relies on the impossibility to approximate the datum with an increasing sequence of bounded functions. In order to get rid of this problem, in this paper we will construct suitable local barriers from below for the approximating problems, which in turn allow us to provide, as a by-product, a simplified proof of the results known if the datum $\mu$ is not singular with respect to the Lebesgue measure. Our argument, at least in the case of a measure which is not singular with respect to the Lebesgue measure, does not make use of any monotonicity argument. The case of a general diffuse measure, possibly singular with respect to $\mathcal{L}$, is much more delicate and will be treated in Section \ref{sing} via a suitable monotone approximation argument of the datum essentially due to Dal Maso, and Baras and Pierre (\cite{dm,bp}). 
 Let us finally stress that one of the problems to deal with when treating these singular problems consists in providing a suitable definition of solution which has to be regular enough in order to give sense to the term on the right hand side of \rife{1}.
 
 The paper is organized as follows: in the next Section we introduce the main tools and notations we will use throughout the paper. Section \ref{s3} is devoted to state the first existence result in the case of measures that are not singular with respect to the Lebesgue measure, to introduce the approximating scheme we will use and to obtain some basic estimates on the approximating problems. In Section \ref{s4} we prove the existence result in this case, while Section \ref{sing} will be devoted to the case of a measure datum that is singular with respect to the Lebesgue measure. Finally, in the last section we provide a uniqueness result in the case $\gamma\geq 1$.

\section{Some basic facts on capacity and measures}
\subsection{Diffuse measures}

First of all, let us recall the concept of \emph{$p$-capacity} which will be useful in order to characterize the datum of our problem (for further details see \cite{hkm}).

Let $K\subseteq \Omega$ be a compact set, and
\[
W(K,\Omega)=\{ \varphi\in C^{\infty}_{0}(\Omega):\
\varphi\geq \chi_{K} \},
\]
where $\chi_{K}$ represents the characteristic function of $K$. For $p>1$, 
we define the $p$-capacity of the compact set $K$ with
respect to $\Omega$ as the quantity
\[
\capp(K)=\inf\left\{\into|\nabla
\varphi|^{p}\ dx :\varphi\in W(K,\Omega)\right\}\,.
\]

The above definition is extended in a standard way by regularity to all Borel subsets of $\Omega$.

Let us denote with $\mathcal{M}^{p}_{0}(\Omega)$ the set of all measures with
bounded variation over $\Omega$ that do not charge sets of
zero $p$-capacity, that is: if $\mu\in \mathcal{M}^{p}_{0}(\Omega)$, then
$\mu(E)=0$, for all $E\in \Omega$ such that $\capp(E)=0$.
As it is nowadays standard in the literature we will simply call \emph{diffuse} a measure $\mu$ in $\mathcal{M}^{2}_{0}(\Omega)$ while, in general, the measures in $\mathcal{M}^{p}_{0}(\Omega)$ will be called \emph{absolutely continuous} with respect to the $p$-capacity (or {\it diffuse} with respect to the $p$-capacity).

 Also notice that, as an immediate consequence of its definition, $p$-capacity enjoys the following monotonicity property with respect to the index $p$. If $1<p_{1}\leq p_{2}$, then ${\rm cap}_{p_{1}}\leq {\rm cap}_{p_{2}}$. This means, roughly speaking, that, as $p$ grows, the sets of zero $p$-capacity decrease while the number of diffuse measures increases, the threshold being $p=N$. In fact, if $p>N$ every nonempty set has positive $p$-capacity, so that $\mathcal{M}^{p}_{0}(\Omega)=\mathcal{M}(\Omega)$, the set of all bounded Radon measures on $\Omega$.

\medskip

In \cite{bgo} is proved that, if $\mu\in
\mathcal{M}^{p}_{0}(\Omega)$, then it may be decomposed as 
\begin{equation}\label{dec}
\mu=f-\div(G),
\end{equation}
where $f\in L^{1}(\Omega)$ and $G\in (L^{p'}(\Omega))^{N}$. The reverse statement is also true, that is a measure that can be decomposed this way is in $\mathcal{M}^{p}_{0}(\Omega)$. We stress the obvious fact that such a decomposition is not unique. 

Also observe that if one considers the (unique) decomposition of a measure $\mu$ in $\mathcal{M}^{p}_{0}(\Omega)$ as 
$$
\mu=\mu_{a}+\mu_{s}
$$
with $\mu_{a}$ absolutely continuous, and $\mu_{s}$ singular with respect to the Lebesgue measure then, as $\mu_{a}\in L^{1}(\Omega)$, we also have $\mu_{s} \in
\mathcal{M}^{p}_{0}(\Omega)$; moreover, if $\mu$ is nonnegative then both $\mu_{a}$ and $\mu_{s}$ are nonnegative too. Notice that the same does not happen if one looks at the decomposition \rife{dec}; in fact, even if $\mu$ is nonnegative we can not expect the term $-\div(G)$ to be nonnegative as showed in \cite[Remark 2.3]{bgo} unless $\mu_{s}\in W^{-1,p'}(\Omega)$. This remark will be useful later when we will construct suitable subsolutions to our problem. 
\subsection{Notation} Throughout the paper, if not explicitly stressed, $C$ will denote any positive real number depending only on $\Omega$, $N$, and on the data of the problem, whose value may change from line to line. For $s$ in $\mathbb{R}$, and $k > 0$, we will use the standard truncation at levels $\pm k$ defined by $T_{k}(s) = \max(-k,\min(s,k))$.

We will also use a well known consequence of Egorov theorem that we state for the convenience of the reader. 
\begin{theorem}\label{ego} Let $\{f_n \}\subset L^{1}(\Omega)$ and $\{g_n \}\subset L^{\infty}(\Omega)$ be two sequences such that $f_n$ weakly converges to $f$ in $L^{1}(\Omega)$, and 
$g_n$ converges to $g$ a.e. in and $\ast$-weakly in $L^{\infty}(\Omega)$. Then
$$
\dys\lim_{n\to \infty}\int_{\Omega} f_n\, g_n \ dx=\int_{\Omega} f g\ dx\,.
$$
\end{theorem}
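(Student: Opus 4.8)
The plan is to reduce everything to the duality pairing $\luno$--$\lio$ together with a single application of Egorov's theorem, the real engine being the equi-integrability of $\{f_n\}$ furnished by its weak convergence in $\luno$. First I would write the decomposition
$$
\into f_n g_n\,dx - \into f g\,dx = \into f_n (g_n - g)\,dx + \into (f_n - f)\,g\,dx,
$$
and dispose of the second term immediately: since $g\in\lio$ and $f_n$ converges weakly to $f$ in $\luno$, the very definition of weak $\luno$-convergence (testing against $\lio$) gives $\into (f_n-f)\,g\,dx\to 0$.

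The substance of the argument lies in the first term. I would begin by recording two consequences of the hypotheses. The $\ast$-weak convergence of $\{g_n\}$ forces, via the uniform boundedness principle, a uniform bound $\|g_n\|_{\lio}\le M$, whence $\|g_n-g\|_{\lio}\le 2M$ for every $n$. The weak convergence of $\{f_n\}$ in $\luno$ yields, by the Dunford--Pettis theorem, that $\{f_n\}$ is equi-integrable, and in particular bounded in $\luno$.

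Now fix $\eps>0$. By equi-integrability I would choose $\delta>0$ such that $\int_A |f_n|\,dx<\eps$ for every $n$ and every measurable $A\subset\Omega$ with $|A|<\delta$. Since $|\Omega|<\infty$ and $g_n\to g$ a.e., Egorov's theorem provides a measurable set $E\subset\Omega$ with $|\Omega\setminus E|<\delta$ on which $g_n\to g$ uniformly. Splitting $\into f_n(g_n-g)\,dx$ over $E$ and $\Omega\setminus E$, on $E$ I bound by $\|g_n-g\|_{\lio(E)}\,\|f_n\|_{\luno}\to 0$ (uniform convergence times a uniformly bounded $\luno$-norm), while on $\Omega\setminus E$ I bound by $2M\int_{\Omega\setminus E}|f_n|\,dx\le 2M\eps$ uniformly in $n$. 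Hence $\limsup_n\,\bigl|\into f_n(g_n-g)\,dx\bigr|\le 2M\eps$, and letting $\eps\to0$ gives the conclusion.

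The main obstacle, and the single point where the strength of the hypothesis is essential, is controlling the integral over the ``bad set'' $\Omega\setminus E$ uniformly in $n$: mere boundedness of $\{f_n\}$ in $\luno$ would not suffice there, and it is precisely equi-integrability (equivalently, weak $\luno$-compactness through Dunford--Pettis) that rescues the estimate. The a.e.\ convergence of $\{g_n\}$ enters only through Egorov's theorem, which converts it into uniform convergence off a set of arbitrarily small measure.
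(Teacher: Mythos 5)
Your proof is correct and follows exactly the route the paper has in mind: the paper states this result without proof, describing it only as ``a well known consequence of Egorov theorem,'' and your argument (Dunford--Pettis equi-integrability of $\{f_n\}$ plus Egorov to upgrade the a.e.\ convergence of $\{g_n\}$ to uniform convergence off a small set) is precisely that standard derivation.
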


\section{Main assumptions and first existence result}\label{s3}
Let $\Omega$ be a bounded open set of $\rn$, $N\geq2$. Moreover, 
let $A :\rn \mapsto \mathcal{M}^{N\times N}$ be a symmetric matrix satisfying the following
 standard assumptions: there exist two positive constants $0<\alpha\leq\beta$ such that 
\begin{equation}\label{a1}
 \alpha|\xi|^2\leq A (x)\xi\cdot\xi, \qquad \text{a.e. on} \ \Omega,\ \forall \xi\in\rn\,, 
\end{equation}
and 
\begin{equation}\label{a2}
|A (x)|\leq \beta, \qquad \text{a.e. on} \ \Omega\,.
\end{equation}
For $\gamma > 0$, and $\mu$ in $\mathcal{M}(\Omega)$, we will consider the following singular elliptic problem
\begin{equation}\label{1a}
\begin{cases}
 \dys -\div (A(x)\nabla u) = \frac{\mu}{u^{\gamma}} & \text{in}\ \Omega,\\
 u=0 &\text{on}\ \partial\Omega,\\
 u>0 &\text{on}\ \Omega\,.
 \end{cases}
\end{equation}

In order to provide an almost elementary and self-contained proof of Theorem \ref{exi} below, first we will assume that $\mu_{a}\not\equiv 0$ (in Section \ref{sing} we handle the case of purely singular measures). 
First of all we need to introduce the definition of solution for problem \rife{1a}. The main difficulty relies in giving sense to the right hand side as $u$ need not be, in general, a continuous function. 

\begin{definition}\label{defi}
Let $p>1$ and $\mu\in \mathcal{M}^{p}_{0}(\Omega)$. A function $u$ is said to be a (distributional) solution to problem \rife{1a} if $u\in W^{1,p}_{{\rm loc}}(\Omega)$, for any compact subset $\omega\subset\subset\Omega$, there exists a positive constant $c_{\omega}$ such that $$u\geq c_{\omega}>0\ \ \text{a.e. on}\ \ \omega\,,$$ 
\begin{equation}\label{def}
\into A(x)\nabla u \nabla \varphi\ dx=\into \frac{\varphi}{u^{\gamma}}d\mu, \ \ \ \text{for any}\ \ \varphi\in\mathcal{D}(\Omega)\,,
\end{equation}
and
\begin{itemize}
\item if $\gamma\leq 1$ then $u\in {W^{1,1}_{0}(\Omega)}$,
\item if $\gamma> 1$ then $u^{\frac{\gamma+1}{2}}\in \huz$.
\end{itemize}

\end{definition}

\begin{remark}
Notice that the right hand side of \rife{def} is well defined only as duality between $W^{1,p}_{0}(\Omega)\cap L^{\infty}(\Omega)$ and $L^{1}(\Omega)+W^{-1,p'}(\Omega)$. In fact, on one hand, since $\mu$ is a diffuse measure with respect to the $p$-capacity then from \rife{dec} we know that $\mu\in L^{1}(\Omega)+W^{-1,p'}(\Omega)$. On the other hand, as we require $u$ to be strictly positive on every compact subset of $\Omega$, then it is easy to check that $u^{-\gamma}\in W^{1,p}_{{\rm loc}}(\Omega)\cap L^{\infty}_{{\rm loc}}(\Omega)$, and so $\varphi u^{-\gamma}\in W^{1,p}_{0}(\Omega)\cap L^{\infty}_{}(\Omega)$. By keeping this fact in mind, with a little abuse of notation the right hand side of \rife{def} is nothing but
\begin{equation}\label{abuse}
\into \frac{\varphi}{u^{\gamma}}d\mu:=\langle\mu,\varphi u^{-\gamma}\rangle_{L^{1}(\Omega)+W^{-1,p'}(\Omega), W^{1,p}_{0}(\Omega)\cap L^{\infty}(\Omega)}\,,
\end{equation}
for every $ \varphi\in\mathcal{D}(\Omega)$.

A second important remark is the following: as we saw the use of test functions in $\mathcal{D}(\Omega)$ is useful in order to give sense to the right hand side of the equation, but it is not only a technical device. In fact, even in the regular case (e.g. $\Omega$ smooth and $\mu$ a positive H\"older continuous function on $\overline{\Omega}$) the right hand side of the equation is not integrable in general (unless $\gamma<1$) (see \cite{lm}). 
\end{remark}

\begin{remark}\label{augu}
Some words on the boundary data are in order. If $\gamma>1$ then the boundary datum is achieved in a weaker sense than the usual one. Anyway let us observe that, if $\Omega$ is smooth enough and $u^{\frac{\gamma+1}{2}}\in \huz$, then
$$
\lim_{\vare\to 0^{+}}\frac{1}{\vare}\int_{\{x:{\rm dist}(x,\partial\Omega)<\vare\}} u^{\frac{\gamma+1}{2}} (x) \, dx=0
$$ 
(see for instance \cite{ponce}). Now, as $u$ is nonnegative and $\gamma>1$, using H\"older's inequality one can easily get that 
\begin{equation}\label{aug}
\lim_{\vare\to 0^{+}}\frac{1}{\vare}\int_{\{x:{\rm dist}(x,\partial\Omega)<\vare\}} u (x) \, dx=0,
\end{equation} 
which is a clearer way to understand the boundary condition and that will be used in Section \ref{u} in order to prove uniqueness. 
\end{remark}

Now we are in the position to state our main existence result.

\begin{theorem}\label{exi}
Let either $\gamma\geq1$ and $\mu$ be a nonnegative diffuse measure with respect to the $2$-capacity or $0<\gamma<1$ and $\mu$ be a nonnegative diffuse measure with respect to the $q'$-capacity with $q'=\frac{N(\gamma+1)}{(N-1)\gamma+1}$. Moreover assume $\mu_{a}\not\equiv 0$. Then there exists a solution for problem \rife{1a} in the sense of Definition \ref{defi}. 
\end{theorem}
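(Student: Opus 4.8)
The plan is to obtain $u$ as the limit of solutions of regularized problems, the crucial ingredient being a uniform local lower bound produced by an explicit family of subsolutions whose existence is guaranteed precisely by the hypothesis $\mu_{a}\not\equiv0$. Set $p=2$ if $\gamma\geq1$ and $p=q'$ if $0<\gamma<1$, so that by assumption $\mu\in\mathcal{M}^{p}_{0}(\Omega)$ and, by \eqref{dec}, $\mu=f-\div(G)$ with $f\in L^{1}(\Omega)$ and $G\in(L^{p'}(\Omega))^{N}$. Since $\mu_{a}\not\equiv0$ I fix a nonnegative nontrivial $\psi\in L^{\infty}(\Omega)$ with $\psi\leq\mu_{a}$, write $\mu=\psi+(\mu-\psi)$ with $\mu-\psi\geq0$ diffuse, and following Dal Maso and Baras--Pierre approximate $\mu-\psi$ by nonnegative bounded functions $\nu_{n}$ whose decompositions converge; setting $\mu_{n}=\psi+\nu_{n}\geq\psi$ I then solve, by Schauder's fixed point theorem applied to the bounded right-hand side,
\[
-\div(A(x)\nabla u_{n})=\frac{\mu_{n}}{(u_{n}+\tfrac1n)^{\gamma}}\quad\text{in }\Omega,\qquad u_{n}=0\ \text{on }\partial\Omega,
\]
obtaining $u_{n}\in\huz\cap\lio$ with $u_{n}\geq0$, and in fact $u_{n}>0$ in $\Omega$ by the strong maximum principle.

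The heart of the argument is the uniform local lower bound. Let $z\in\huz\cap\lio$ solve $-\div(A(x)\nabla z)=\psi$; then $z>0$ in $\Omega$, so $z\geq c_{\omega}>0$ on each $\omega\subset\subset\Omega$. Writing $Z=\|z\|_{\lio}$ and choosing $\lambda=\min\{1,(Z+1)^{-\gamma}\}$, which is \emph{independent of }$n$, the function $\lambda z$ is a subsolution of the $n$-th problem, since on $\{\psi>0\}$ one has $\lambda\psi\leq\psi\,(\lambda z+\tfrac1n)^{-\gamma}$ (because $\lambda(\lambda z+\tfrac1n)^{\gamma}\leq\lambda(Z+1)^{\gamma}\leq1$), while $u_{n}$ is a supersolution of the same equation as $\mu_{n}\geq\psi$. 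As $s\mapsto\psi\,(s+\tfrac1n)^{-\gamma}$ is nonincreasing, the weak comparison principle yields $u_{n}\geq\lambda z$, hence $u_{n}\geq\lambda c_{\omega}>0$ on every $\omega\subset\subset\Omega$, uniformly in $n$. This is the step I expect to be the genuine obstacle once $\mu_{a}\equiv0$ (Section \ref{sing}), but here it is made elementary by the presence of $\mu_{a}$.

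With the lower bound in hand I derive the a priori estimates. For $\gamma\geq1$, testing the equation with $u_{n}^{\gamma}\in\huz\cap\lio$ and using $u_{n}^{\gamma}(u_{n}+\tfrac1n)^{-\gamma}\leq1$ bounds $\int_{\Omega}u_{n}^{\gamma-1}|\nabla u_{n}|^{2}$ by $\|\mu_{n}\|_{L^{1}(\Omega)}\leq C$, that is, $u_{n}^{(\gamma+1)/2}$ is bounded in $\huz$; combined with the local lower bound this also gives a local $H^{1}$ bound. For $0<\gamma<1$, testing with $u_{n}$, splitting the right-hand side through $\mu_{n}=\psi+\nu_{n}$ and the corresponding decompositions, and applying H\"older, Sobolev and Young inequalities, yields a uniform bound on $u_{n}$ in $W^{1,q}_{0}(\Omega)$ with $q=\frac{N(\gamma+1)}{N-1+\gamma}$ conjugate to the $q'$ of the capacity assumption. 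Along a subsequence $u_{n}\rightharpoonup u$ in the relevant space and $u_{n}\to u$ a.e., so the lower bound passes to the limit, $u\geq c_{\omega}>0$ on compacta, and the summability conditions of Definition \ref{defi} hold.

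It remains to pass to the limit in the weak formulation. For $\varphi\in\mathcal{D}(\Omega)$ with $\supp\varphi=K\subset\subset\Omega$ the left-hand side converges by the weak convergence of the gradients. For the right-hand side, the uniform bound $u_{n}\geq c_{K}$ on $K$ makes $\varphi(u_{n}+\tfrac1n)^{-\gamma}$ bounded in $\sob\cap\lio$ and convergent a.e. to $\varphi u^{-\gamma}$; reading the term as the duality pairing \eqref{abuse} and using the convergence of the decompositions of $\mu_{n}$ together with Theorem \ref{ego} to treat the $L^{1}$ part, the pairing converges to $\langle\mu,\varphi u^{-\gamma}\rangle$, which is \eqref{def}. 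The delicate point of this last step, and the place where the argument must be handled with care, is the coupling of the measure convergence with the convergence of the test functions $\varphi(u_{n}+\tfrac1n)^{-\gamma}$.
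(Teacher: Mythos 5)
Your architecture is the same as the paper's: regularize the problem, prove a uniform local lower bound by comparison with a subsolution manufactured from the absolutely continuous part of $\mu$ (this is exactly the content of Lemma \ref{pos}, where the comparison function is the solution $w$ of the auxiliary problem $-\div(A(x)\nabla w)=T_{1}(\mu_{a})/(1+w)^{\gamma}$; your scaled linear subsolution $\lambda z$ with $\lambda=\min\{1,(Z+1)^{-\gamma}\}$ is a legitimate variant, and the comparison via $(\lambda z-u_{n})^{+}$ and the monotonicity of $s\mapsto(s+\frac1n)^{-\gamma}$ is the same computation), then derive a priori estimates and pass to the limit through the decomposition $f_{n}-\div(G_{n})$ and Theorem \ref{ego}. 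Your treatment of $\gamma\geq1$ and of the limit passage is consistent with the paper.

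There is, however, a genuine gap in the a priori estimate for $0<\gamma<1$. Testing the approximate equation with $u_{n}$ gives
$$
\alpha\int_{\Omega}|\nabla u_{n}|^{2}\,dx\leq\int_{\Omega}\frac{u_{n}}{(u_{n}+\frac1n)^{\gamma}}\,d\mu_{n}\leq\int_{\Omega}u_{n}^{1-\gamma}\,d\mu_{n}\,,
$$
i.e.\ the full $H^{1}_{0}(\Omega)$ norm on the left against a right-hand side that cannot be closed when $\{\mu_{n}\}$ is only bounded in $L^{1}(\Omega)$: no uniform $L^{\infty}$ or higher integrability of $u_{n}$ is available at this stage, and no combination of H\"older, Sobolev and Young absorbs $\int_{\Omega}u_{n}^{1-\gamma}\,d\mu_{n}$. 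More to the point, for $\gamma<1$ and $L^{1}$-bounded data the solutions are \emph{not} expected to lie in $H^{1}_{0}(\Omega)$ at all --- the sharp space is precisely $W^{1,q}_{0}(\Omega)$ with $q=\frac{N(\gamma+1)}{N-1+\gamma}<2$ --- so any derivation whose left-hand side is $\int_{\Omega}|\nabla u_{n}|^{2}$ must fail. The correct choice is $v=(\vare+u_{n})^{\gamma}-\vare^{\gamma}$ with $0<\vare<\frac1n$, which yields $\alpha\gamma\int_{\Omega}|\nabla u_{n}|^{2}(\vare+u_{n})^{\gamma-1}\,dx\leq\mu_{n}(\Omega)\leq C$; Sobolev then bounds $u_{n}$ in $L^{N(\gamma+1)/(N-2)}(\Omega)$, and H\"older converts the weighted gradient estimate into the $W^{1,q}_{0}(\Omega)$ bound. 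This is what the paper invokes through Proposition \ref{stime} (Theorem 5.6 of \cite{bo}) and carries out explicitly in the proof of Theorem \ref{exiTOT}. Once this step is replaced, the remainder of your argument goes through.
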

\begin{remark}\label{optimal}
Notice that the result of Theorem \ref{exi} perfectly fits with the nonexistence result in \cite{bo}. In fact, in that paper the authors prove nonexistence of solutions for problem \rife{1a} (in the sense of approximating sequences) if either $\gamma\geq1$ and $\mu$ is concentrated on a set of zero $2$-capacity or $0<\gamma<1$ and $\mu$ is concentrated on a set of zero $q'$-capacity with $q'=\frac{N(\gamma+1)}{(N-1)\gamma+1}$. Also observe that, if $\gamma<1$ then $q'>2$ so that, due to the properties of the capacities, we are allow to consider a larger set (with respect to the case $\gamma\geq 1 $) of measure data. 
\end{remark}
\subsection{Approximating solutions and basic estimates} 

We need to approximate the measure $\mu$ with a sequence of smooth functions. The following approximation result can be found for instance in \cite{bgo} (see also \cite{mp}).
\begin{proposition}\label{mp}
Let $\mu=f-\div(G)$ be a nonnegative measure in $\mathcal{M}^{p}_{0}(\Omega)$. Then there exists a sequence of nonnegative functions $\mu_{n}\in L^{2}(\Omega)$ such that
$$
\mu_{n}=f_{n}-{\text\rm \div}(G_{n})\ \ \text{in}\ \ \mathcal{D}'(\Omega)\,,\ \ \|\mu_{n}\|_{L^{1}(\Omega)}\leq C\,,
$$
where $f_{n}$ belongs to $L^{2}(\Omega)$ and weakly converges to $f$ in $L^{1}(\Omega)$ and $G_{n}$ strongly converges to $G$ in $(L^{p'}(\Omega))^{N}$. 
\end{proposition}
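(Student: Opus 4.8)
The plan is to construct $\mu_n$ by mollifying the measure itself, which makes the two structural requirements---nonnegativity and the uniform $L^1$ bound---essentially free, and then to read off the decomposition from the mollification. I would fix a sequence of standard nonnegative mollifiers $\rho_n\in C^\infty_c(\rn)$ with $\int\rho_n=1$ and $\supp\rho_n\subset B_{1/n}$, extend $\mu$ by zero to a bounded nonnegative measure on $\rn$, and set $\mu_n:=\rho_n*\mu$, restricted to $\Omega$. Then $\mu_n\in C^\infty(\Omega)\subset L^2(\Omega)$, one has $\mu_n\geq 0$ because $\mu\geq 0$ and $\rho_n\geq 0$, and $\|\mu_n\|_{L^1(\Omega)}\leq\|\rho_n\|_{L^1}\|\mu\|_{\mathcal{M}(\Omega)}=\|\mu\|_{\mathcal{M}(\Omega)}=:C$. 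Moreover $\mu_n\rightharpoonup\mu$ in the narrow (weak-$\ast$) topology of measures, which is what will drive the passage to the limit in the decomposition.

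For the decomposition I would extend $G$ by zero and set $G_n:=\rho_n*G$, restricted to $\Omega$; standard mollifier theory gives $G_n\to G$ strongly in $(L^{p'}(\Omega))^N$, which is one of the two required convergences. I would then define $f_n:=\mu_n+\div(G_n)\in L^2(\Omega)$, so that $\mu_n=f_n-\div(G_n)$ holds in $\mathcal{D}'(\Omega)$ by construction, with $f_n\in L^2(\Omega)$ as demanded. Convergence of $f_n$ to $f$ in $\mathcal{D}'(\Omega)$ is then immediate: testing against $\varphi\in\mathcal{D}(\Omega)$ and using $\mu=f-\div(G)$ from $\rife{dec}$ together with $\mu_n\rightharpoonup\mu$ and $G_n\to G$, the two divergence terms cancel in the limit and one is left with $\into f_n\varphi\to\into f\varphi$. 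The remaining, and only substantial, task is to upgrade this distributional convergence to weak convergence in $L^1(\Omega)$, which by the Dunford--Pettis theorem amounts to proving that $\{f_n\}$ is equi-integrable.

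The hard part is precisely this equi-integrability, and it is a genuinely boundary phenomenon. In the interior, where $\mathrm{dist}(x,\partial\Omega)>1/n$, a direct computation gives $f_n=\rho_n*f$, which is bounded in $L^1$ and equi-integrable; the difficulty is concentrated in the thin layer $\{0<\mathrm{dist}(x,\partial\Omega)\leq 1/n\}$, where the mollification of the measure and the mollification of the decomposition no longer coincide. Extending $G$ by zero creates a jump across $\partial\Omega$, so $\div(G_n)$ develops a boundary layer of width $1/n$ that is bounded in $L^1$ but concentrates on $\partial\Omega$, hence is not equi-integrable; this is exactly the obstruction, and it is the reason one can only expect weak---not strong---$L^1$ convergence of $f_n$. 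To overcome it I would absorb the boundary layer into the divergence term, replacing $G_n$ by $G_n-H_n$ with $H_n\to 0$ in $(L^{p'}(\Omega))^N$ (so strong convergence of the $G$'s is preserved), leaving an $f_n$ with no mass concentrating on $\partial\Omega$. The delicate point, and the technical core of the argument, is that $G\in(L^{p'}(\Omega))^N$ has no trace on $\partial\Omega$, so this correction cannot be organized through a boundary integration by parts; it must instead be carried out by exhausting $\Omega$ with compactly contained open sets and combining a cut-off with a diagonal extraction. Once equi-integrability is secured in this way, Dunford--Pettis yields $f_n\rightharpoonup f$ weakly in $L^1(\Omega)$, which completes the proof.
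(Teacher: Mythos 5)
First, a remark on the comparison itself: the paper does not prove Proposition \ref{mp} at all --- it is quoted from \cite{bgo} (see also \cite{mp}) --- so your argument has to stand entirely on its own. It does not, because the one step that you yourself single out as ``the only substantial task'', namely the weak $L^{1}(\Omega)$ convergence of $f_{n}$, is never established, and the sketch you give for it rests on a claim that is false in general. With $G$ extended by zero, $G_{n}=\rho_{n}\ast G$ and $f_{n}:=\mu_{n}+\div(G_{n})$, you correctly get $f_{n}=\rho_{n}\ast f$ on $\{x:\ {\rm dist}(x,\partial\Omega)>1/n\}$, but on the boundary layer the only available estimate is
$$
\int_{\{{\rm dist}(x,\partial\Omega)\le 1/n\}}|\div(\rho_{n}\ast G)|\,dx\ \le\ \|\nabla\rho_{n}\|_{L^{1}(\rn)}\int_{\{{\rm dist}(x,\partial\Omega)\le 2/n\}}|G|\ \le\ C\,n\,\|G\|_{(L^{p'}(\Omega))^{N}}\,\big|\{{\rm dist}(x,\partial\Omega)\le 2/n\}\big|^{1/p},
$$
which is of order $n^{1/p'}$ even for a smooth boundary (and worse for a general bounded open set, which is all that is assumed). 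Since $G$ is merely in $(L^{p'}(\Omega))^{N}$ and has no normal trace on $\partial\Omega$, there is no reason for the boundary layer of $\div(G_{n})$ to be bounded in $L^{1}(\Omega)$, contrary to what you assert; consequently your $f_{n}$ need not even be bounded in $L^{1}(\Omega)$, let alone equi-integrable.

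The proposed repair --- replace $G_{n}$ by $G_{n}-H_{n}$ with $H_{n}\to 0$ in $(L^{p'}(\Omega))^{N}$ so that $\div(H_{n})$ absorbs the boundary layer --- is precisely the content of the proposition and is not constructed. What is needed is to write the defect $\rho_{n}\ast T$, where $T=\div(\widetilde{G})-\widetilde{\div(G)}$ is the purely boundary--supported distribution created by the zero extension, as $\div(H_{n})$ with $H_{n}$ small in $(L^{p'}(\Omega))^{N}$; for a general field $G\in (L^{p'}(\Omega))^{N}$ whose divergence is a measure, on a general bounded open set, this is not achieved by a cut-off plus diagonal extraction, and your sketch gives no indication of how it would be. The parts of the statement you obtain for free (nonnegativity of $\mu_{n}$, the uniform $L^{1}$ bound, $\mu_{n}=f_{n}-\div(G_{n})$ in $\mathcal{D}'(\Omega)$, and $G_{n}\to G$ strongly) are correct, but the missing item --- the weak $L^{1}(\Omega)$ convergence of $f_{n}$ --- is exactly the one the paper later relies on (through Theorem \ref{ego}) to pass to the limit in the approximating problems, so the gap is not peripheral. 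For a complete argument you should consult the constructions in \cite{bgo} and \cite{mp}, which the paper cites in place of a proof.
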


In order to construct suitable subsolutions for our approximating problems we need to approximate the datum $\mu$ in a particular way. Consider the decomposition of $\mu$ with respect to the Lebesgue measure
$$
\mu=\mu_{a}+\mu_{s}, 
$$
and let $\mu_{s,n}$ be an approximation of the singular part of $\mu$ given as in Proposition \ref{mp} (remark that $0 \leq \mu_{s}\leq \mu$ and so $\mu_{s}\in \mathcal{M}^{p}_{0}(\Omega)$).

Let $u_{n}$ be a solution to 
\begin{equation}\label{pn}
\begin{cases}
 \dys -\div(A(x)\nabla u_{n}) = \frac{T_{n}(\mu_{a})+\mu_{s,n}}{\left(\frac1n+u_{n}\right)^{\gamma}} & \text{in}\ \Omega,\\
u_{n}=0 &\text{on}\ \partial\Omega\,.
 \end{cases}
\end{equation}
Existence of positive distributional solutions for problem \rife{pn} can be deduced by standard fixed point argument as in \cite{bo}. Moreover, depending on the value of $\gamma$, the sequence $\{u_{n}\}$ satisfies some a priori estimates that we collect in the following

\begin{proposition}\label{stime}
Let $u_{n}$ be a solution to problem \rife{pn}. Then 
\begin{enumerate}
\item If $\gamma<1$, then 
$$
\|u_{n}\|_{W^{1,q}_{0}(\Omega)}\leq C, \ \ \text{where}\ q=\frac{N(\gamma+1)}{N-1+\gamma}\,.
$$
\item If $\gamma=1$, then 
$$
\|u_{n}\|_{\huz}\leq C\,.
$$
\item If $\gamma>1$, then 
$$
\|u_{n}\|_{H^{1}(\omega)}\leq C_{\omega}\ \ \forall\ \omega\subset\subset\Omega, \ \text{and}\ \ \|u_{n}^{\frac{\gamma+1}{2}}\|_{\huz}\leq C\,.
$$
\end{enumerate}
\end{proposition}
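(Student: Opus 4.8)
\emph{Proof plan.} Throughout set $h_n=T_n(\mu_a)+\mu_{s,n}$, which is nonnegative and, by Proposition \ref{mp}, bounded in $L^1(\Omega)$; write also $v_n=\frac1n+u_n\geq\frac1n$. Since $h_n\in L^2(\Omega)$ and $v_n^{-\ga}\leq n^{\ga}$, the right-hand side of \rife{pn} lies in $L^2(\Omega)\subset H^{-1}(\Omega)$, so for each fixed $n$ one has $u_n\in\huz\cap L^\infty(\Omega)$; hence every test function used below is admissible (after the usual truncation, if needed) and every integral we write is a priori finite. The estimates are then produced by testing \rife{pn} with suitable functions of $u_n$ and invoking the ellipticity constant $\al$ in \rife{a1}. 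For $\ga=1$ I would simply test with $u_n$: since $\frac{u_n}{\frac1n+u_n}\leq1$ the right-hand side is $\leq\into h_n\leq C$, while the left-hand side is $\geq\al\into|\D u_n|^2$, and Poincar\'e's inequality closes the $\huz$ bound.

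\emph{Case $\ga>1$.} For the global bound I would test with $T_k(u_n)^{\ga}$ (admissible, being a Lipschitz function of the bounded $T_k(u_n)$). Because $T_k(u_n)\leq u_n$ and $u_n^{\ga}\leq v_n^{\ga}$, the right-hand side is again $\leq\into h_n\leq C$ uniformly in $k$, whereas the identity $u_n^{\ga-1}|\D u_n|^2=\frac{4}{(\ga+1)^2}|\D u_n^{(\ga+1)/2}|^2$ shows that the left-hand side controls $\frac{4\al\ga}{(\ga+1)^2}\into|\D(T_k(u_n)^{(\ga+1)/2})|^2$. Letting $k\to\infty$ yields $\|u_n^{(\ga+1)/2}\|_{\huz}\leq C$. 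For the \emph{local} bound I would use the pointwise relation $\D u_n=\frac{2}{\ga+1}u_n^{(1-\ga)/2}\,\D(u_n^{(\ga+1)/2})$ together with the uniform lower bound $u_n\geq c_\omega>0$ on $\omega\subset\subset\Omega$ coming from the barrier/subsolution construction: as $(1-\ga)/2<0$, on $\omega$ we have $u_n^{(1-\ga)/2}\leq c_\omega^{(1-\ga)/2}$, whence $\int_{\omega}|\D u_n|^2\leq C\,c_\omega^{1-\ga}\,\|u_n^{(\ga+1)/2}\|_{\huz}^2\leq C_\omega$.

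\emph{Case $\ga<1$.} Here the solution has globally infinite energy and the argument is more delicate. First I would test with $(1+u_n)^{\ga}-1$ to get the \emph{weighted} energy estimate
\begin{equation}\label{wtd}
\into\frac{|\D u_n|^2}{(1+u_n)^{1-\ga}}\,dx\leq C,
\end{equation}
the key point being that $\frac{(1+u_n)^{\ga}-1}{v_n^{\ga}}$ is bounded uniformly in $n$ (since $\frac{(1+t)^{\ga}-1}{t^{\ga}}$ tends to $0$ as $t\to0^+$ and to $1$ as $t\to\infty$), so the right-hand side is $\leq C\into h_n\leq C$. Then, writing $|\D u_n|^q=\frac{|\D u_n|^q}{(1+u_n)^{(1-\ga)q/2}}(1+u_n)^{(1-\ga)q/2}$ and using H\"older with exponents $\frac2q,\frac{2}{2-q}$, I would obtain
$$\|\D u_n\|_{L^q}^q\leq\left(\into\frac{|\D u_n|^2}{(1+u_n)^{1-\ga}}\right)^{q/2}\left(\into(1+u_n)^{q^{*}}\right)^{(2-q)/2},$$
because the choice $q=\frac{N(\ga+1)}{N-1+\ga}$ is exactly the one for which $\frac{(1-\ga)q}{2-q}=q^{*}:=\frac{Nq}{N-q}$. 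Bounding the first factor by \rife{wtd} and the last one by the Sobolev inequality $\|u_n\|_{L^{q^{*}}}\leq C\|\D u_n\|_{L^q}$, the right-hand side is $\leq C\bigl(1+\|\D u_n\|_{L^q}^{q(1-\ga)/2}\bigr)$; since $\frac{q(1-\ga)}{2}<q$ this absorbs the gradient and yields $\|u_n\|_{\sobq}\leq C$.

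\emph{Main obstacle.} The routine parts are $\ga=1$ and the global estimate for $\ga>1$. The two genuine difficulties are: (i) in the case $\ga<1$, closing the self-improving H\"older--Sobolev argument precisely at the critical exponent $q$, where every inequality is borderline, so one must \emph{absorb} the gradient norm rather than merely bootstrap integrability, and must check that \rife{wtd} is uniform in $n$ despite the $\frac1n$-regularization; and (ii) in the local part of $\ga>1$, the fact that the estimate is not self-contained but relies on the uniform-in-$n$ lower bound $u_n\geq c_\omega$, whose construction via local barriers is the principal new device of the paper.
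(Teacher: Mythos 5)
Your argument is correct, and it employs exactly the test functions that the paper's result rests on: the paper's own ``proof'' of Proposition \ref{stime} is merely a citation of Theorem 5.6, Lemma 3.1 and Lemma 4.1 of \cite{bo}, and those computations are reproduced almost verbatim in Section \ref{sing} of the paper with the same choices $u_{n}$, $T_{k}(u_{n})^{\gamma}$ and $(\vare+u_{n})^{\gamma}-\vare^{\gamma}$. You deviate in two places, both harmlessly and arguably more cleanly. (i) For the local $H^{1}$ bound when $\gamma>1$, the cited Lemma 4.1 of \cite{bo} (and the paper's Section \ref{sing}) tests with $u_{n}\varphi^{2}$ for a cutoff $\varphi$; you instead read the bound off pointwise from $\nabla u_{n}=\frac{2}{\gamma+1}u_{n}^{(1-\gamma)/2}\nabla\big(u_{n}^{(\gamma+1)/2}\big)$ combined with the lower bound of Lemma \ref{pos}. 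This is valid and shorter; note only that the $H^{1}(\omega)$ norm also requires $\|u_{n}\|_{L^{2}(\omega)}$, which follows at once from the Sobolev embedding applied to $u_{n}^{(\gamma+1)/2}$, and that invoking Lemma \ref{pos} here is not circular even though it is stated after the proposition, since its proof is independent of these estimates. (ii) For $\gamma<1$ the paper lets $\vare\to0$ in the weighted estimate, applies Sobolev to $u_{n}^{(\gamma+1)/2}$ to get $\|u_{n}\|_{L^{s}}\leq C$ with $s=\frac{N(\gamma+1)}{N-2}$, and then a single H\"older step closes the $W^{1,q}_{0}$ bound with no absorption; you keep the weight $(1+u_{n})$ and instead absorb $\|\nabla u_{n}\|_{L^{q}}^{q(1-\gamma)/2}$ into the left-hand side, which is legitimate because that exponent is strictly smaller than $q$ and $\|\nabla u_{n}\|_{L^{q}}$ is finite for each fixed $n$ (as $u_{n}\in\huz$ and $q\leq 2$); your exponent bookkeeping $\frac{(1-\gamma)q}{2-q}=\frac{N(\gamma+1)}{N-2}$ is correct. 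The only overstatement is the claim that $u_{n}\in L^{\infty}(\Omega)$: for fixed $n$ the right-hand side of \rife{pn} is only in $L^{2}(\Omega)$, which does not yield boundedness when $N\geq 4$; but none of your test functions actually needs it, since each is a bounded Lipschitz function of $u_{n}$ vanishing at the origin (or $u_{n}$ itself, paired with an $H^{-1}(\Omega)$ datum).
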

\begin{proof}
The proof of this result can be easily deduced from, respectively, Theorem 5.6, Lemma 3.1, and Lemma 4.1 in \cite{bo}.
\end{proof}

Now we state and prove the key result of this section, that will allow us to pass to the limit in the approximating problems. 

\begin{lemma}\label{pos}
Let $u_{n}$ be the solutions of problem \rife{pn} with $\mu_{a}\not= 0$. Then, for any $\omega\subset\subset\Omega$ there exists a positive constant $c_{\omega}$ such that
\begin{equation}\label{bb}
u_{n}\geq c_{\omega}>0, \ \forall\ n\in \na\,.
\end{equation}
\end{lemma}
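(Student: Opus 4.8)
The plan is to bound $u_{n}$ from below, uniformly in $n$, by a single $n$-independent subsolution built out of the absolutely continuous part of the datum, and then to conclude by a comparison argument that exploits the fact that the right-hand side of \rife{pn} is \emph{nonincreasing} in $u_{n}$ (so that, as anticipated in the Introduction, no monotonicity of the sequence $\{u_{n}\}$ in $n$ is needed). First I would fix once and for all the weak solution $\psi$ of the linear problem
$$
-\div(A(x)\nabla\psi)=T_{1}(\mu_{a})\quad\text{in }\Omega,\qquad \psi=0\quad\text{on }\partial\Omega .
$$
Since $\mu_{a}\geq 0$ and $\mu_{a}\not\equiv 0$, the datum $T_{1}(\mu_{a})=\min(\mu_{a},1)$ is a nonnegative bounded function that is not identically zero; hence $\psi\in\huz\cap\lio$ by the standard $L^{\infty}$ estimate (bounded datum), and, being a nonnegative supersolution of the homogeneous equation that does not vanish identically, $\psi$ is strictly positive in the interior. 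Using the weak Harnack inequality for $-\div(A\nabla\cdot)$ along a chain of balls (recall $A$ is only bounded and measurable), for every $\omega\subset\subset\Omega$ there is $d_{\omega}>0$ with $\psi\geq d_{\omega}$ on $\omega$.

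Next I would check that a small multiple $\lambda\psi$ is a subsolution of \rife{pn} for every $n$, with $\lambda$ \emph{independent of $n$}. Discarding the nonnegative term $\mu_{s,n}$ and using $T_{n}(\mu_{a})\geq T_{1}(\mu_{a})$ for $n\geq 1$, it suffices to ensure
$$
-\div(A(x)\nabla(\lambda\psi))=\lambda\,T_{1}(\mu_{a})\leq \frac{T_{1}(\mu_{a})}{\left(\tfrac1n+\lambda\psi\right)^{\gamma}} .
$$
On $\{T_{1}(\mu_{a})=0\}=\{\mu_{a}=0\}$ both sides vanish, while on $\{T_{1}(\mu_{a})>0\}$ the inequality reduces to $\lambda\bigl(\tfrac1n+\lambda\psi\bigr)^{\gamma}\leq 1$. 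Since $\tfrac1n\leq 1$ for $n\geq 1$ and $\psi\leq\|\psi\|_{\lio}$, this holds as soon as $\lambda\bigl(1+\lambda\|\psi\|_{\lio}\bigr)^{\gamma}\leq 1$, a single smallness condition on $\lambda$ satisfied for all $n$ at once.

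Finally I would run the comparison principle. Because $T_{n}(\mu_{a})\leq n$ and $\mu_{s,n}\in L^{2}(\Omega)$ while $\bigl(\tfrac1n+u_{n}\bigr)^{\gamma}\geq n^{-\gamma}$, each $u_{n}$ is in fact a finite-energy weak solution with $L^{2}$ right-hand side, so the difference $(\lambda\psi-u_{n})^{+}\in\huz\cap\lio$ is an admissible test function (both functions vanish on $\partial\Omega$). Testing the equation solved by $\lambda\psi-u_{n}$ with $(\lambda\psi-u_{n})^{+}$ and using that $s\mapsto \dfrac{T_{n}(\mu_{a})+\mu_{s,n}}{(\frac1n+s)^{\gamma}}$ is nonincreasing, the right-hand side contribution on $\{\lambda\psi>u_{n}\}$ is $\leq 0$, so by the ellipticity \rife{a1} one gets $\into|\nabla(\lambda\psi-u_{n})^{+}|^{2}\,dx\leq 0$, i.e. $u_{n}\geq\lambda\psi$ a.e. in $\Omega$. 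Restricting to $\omega$ yields $u_{n}\geq\lambda\,d_{\omega}=:c_{\omega}>0$ for all $n\in\na$, which is exactly \rife{bb}.

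The main obstacle is the uniformity in $n$: the datum $T_{n}(\mu_{a})$, and above all the regularizing parameter $\tfrac1n$ in the denominator, vary with $n$, so the subsolution must be designed so that the pointwise inequality survives as $n\to\infty$. The construction isolates the fixed, $n$-independent piece $T_{1}(\mu_{a})$ of the right-hand side — this is precisely where the hypothesis $\mu_{a}\not\equiv 0$ is used — and absorbs all the bad $n$-dependence into the one smallness requirement on $\lambda$. The only other delicate point is the strict interior positivity of $\psi$ on compact sets, which, because $A$ is merely bounded and measurable rather than continuous, must be obtained from the Harnack inequality rather than from a classical strong maximum principle.
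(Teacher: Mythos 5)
Your proof is correct and follows essentially the same strategy as the paper's: both construct a single $n$-independent subsolution out of the fixed piece $T_{1}(\mu_{a})$ of the datum (discarding $\mu_{s,n}\geq 0$ and using $T_{n}(\mu_{a})\geq T_{1}(\mu_{a})$), and both conclude by testing the difference of the two formulations with the positive part of the difference and using the monotonicity of $s\mapsto (\tfrac1n+s)^{-\gamma}$ together with ellipticity. The only variation is in how the barrier is built: the paper takes the bounded, locally strictly positive solution $w$ of the auxiliary semilinear problem $-\div(A(x)\nabla w)=T_{1}(\mu_{a})/(1+w)^{\gamma}$, quoting Lemma 2.1 of \cite{bo} for its properties, whereas you take a small multiple $\lambda\psi$ of the solution of the corresponding linear problem and obtain interior positivity from the weak Harnack inequality; both barriers satisfy the same differential inequality uniformly in $n$, and the comparison step is identical.
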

\begin{proof}
Let $w$ be the (unique) weak solution of problem
$$
\begin{cases}
 \dys -\div(A(x) \nabla w) = \frac{T_{1}(\mu_{a})}{(1 + w)^{\gamma}} & \text{in}\ \Omega,\\
 w=0 &\text{on}\ \partial\Omega\,.
 \end{cases}
$$
In \cite{bo} (Lemma 2.1) it is proved that $w\in \huz\cap L^{\infty}(\Omega)$; moreover $w\geq c_{\omega}>0$ on every compactly supported $\omega\subset\Omega$.

Then, for any $n$, we have that 
\begin{equation}\label{subbe}
 -\div(A(x) \nabla w) \leq \frac{T_{n}(\mu_{a})}{(\frac1n +w)^{\gamma}}\leq \frac{T_{n}(\mu_{a})+\mu_{s,n}}{\left(\frac1n +w\right)^{\gamma}}\,,
\end{equation}
where we have used that $\mu_{s,n}\geq 0$. 
Now we take $(w-u_{n})^{+}$ as test function in both \rife{pn} and \rife{subbe}, we subtract the two formulations and we use ellipticity in order to get
$$
\begin{array}{l}
\dys \alpha\into |\nabla (w-u_{n})^{+}|^{2}\ dx \\ \\ \dys \leq \into (T_{n}(\mu_{a})+\mu_{s,n})\left(\frac{1}{\left(\frac1n +w\right)^{\gamma}}- \frac{1}{\left(\frac1n + u_{n}\right)^{\gamma}}\right)(w-u_{n})^{+}\leq 0\,,
\end{array}
$$
from which we deduce $u_{n}\geq w$ a.e. on $\Omega$, that is, for any $\omega\subset\subset\Omega$, there exists $c_{\omega}>0$, such that 
$$
u_{n}\geq w\geq c_{\omega}>0\,,
$$
and this concludes the proof of $\rife{bb}$. 
\end{proof}

\section{Proof of Theorem \ref{exi}}\label{s4}

\begin{proof}[Proof of Theorem \ref{exi}]

In order to deal with the different regularity and convergences that one derives from Proposition \ref{stime} we need to distinguish between various values of $\gamma$. 

{\it Proof of Theorem \ref{exi} in the case $\gamma= 1$.} Recall that, by Proposition \ref{mp},
$$
\mu_{s,n}=f_{n}-{\text\rm \div}(G_{n})\ \ \text{in}\ \ \mathcal{D}'(\Omega)\,,$$
where $f_{n}$ belongs to $L^{2}(\Omega)$ and weakly converges to $f$ in $L^{1}(\Omega)$ and $G_{n}$ strongly converges to $G$ in $(L^{2}(\Omega))^{N}$, with $\mu_{s}=f-\div(G)$. 

Moreover, using Proposition \ref{stime} and Lemma \ref{pos} we easily deduce the existence of a function $u\in \huz$ such that, up to subsequences, 
$$
 u_{n}\longrightarrow u \ \ \text{a.e. and weakly in}\ \huz\,,
 $$
 and
$$\left (\frac1n+u_{n}\right)^{-1} \longrightarrow \frac1u \ \ \text{a.e. and $\ast$-weakly in $L^{\infty}(\omega)$,}\ \forall \omega\subset\subset\Omega\,.
$$

We need to pass to the limit in the distributional formulation of \rife{pn}; so, if $\varphi\in \mathcal{D}(\Omega)$, we have
\begin{equation}\label{pass}
\begin{array}{l}
\dys\into A(x)\nabla u_{n} \nabla \varphi\ dx=\into \frac{T_{n}(\mu_{a})\varphi}{\frac1n +u_{n}}\ dx \\ \qquad \dys+\into \frac{f_{n}\varphi}{\frac1n +u_{n}}\ dx+\left\langle G_{n}, \nabla \left(\frac{\varphi}{\frac1n +u_{n}}\right)\right\rangle.
\end{array}
\end{equation}

Thanks to the convergence results we proved, there are no difficulties in passing to the limit in all terms of \rife{pass} but the last one. Only notice that in the convergence of the second integral on the right hand side of \rife{pass} we also need to use Theorem \ref{ego} as we only ask for a weak convergence in $L^{1}(\Omega)$ for the sequence $\{f_{n}\}$. 
In order to pass to the limit in the last term, observe that
$$
 \nabla \left(\frac{\varphi}{\frac1n +u_{n}}\right)=\frac{\nabla\varphi}{\frac1n +u_{n}}- \frac{\nabla u_{n}}{\left(\frac1n+u_{n}\right)^{2}}\varphi, 
$$
and, while the first term on the right hand side is strongly convergent in $(L^{2}(\Omega))^{N}$, the second one is weakly convergent in $(L^{2}(\Omega))^{N}$ as $\varphi$ has compact support in $\Omega$ and by Theorem \ref{ego}. This is enough, recalling \rife{abuse}, in order to pass to the limit in \rife{pass} and to get 
$$
\into A(x)\nabla u \nabla \varphi\ dx=\into \frac{\varphi}{u}d\mu, \ \ \ \text{for any}\ \ \varphi\in\mathcal{D}(\Omega). 
$$

{\it Proof of Theorem \ref{exi} in the case $\gamma>1$.} The proof in this case is very similar, up to localization, to the previous one.

Again we have 
$$
\mu_{s,n}=f_{n}-{\text\rm \div}(G_{n})\ \ \text{in}\ \ \mathcal{D}'(\Omega)\,,$$
where $f_{n}$ belongs to $L^{2}(\Omega)$ and weakly converges to $f$ in $L^{1}(\Omega)$ and $G_{n}$ strongly converges to $G$ in $(L^{2}(\Omega))^{N}$, with $\mu_{s}=f-\div(G)$. 

Now, using Proposition \ref{stime} and Lemma \ref{pos} we deduce the existence of a function $u\in H^{1}_{{\rm loc}}(\Omega)$ such that $u^{\frac{\gamma+1}{2}}\in \huz$, and such that, up to subsequences, 
$$
 u_{n}\longrightarrow u \ \ \text{a.e. and weakly in}\ H^{1}(\omega)\,, \ \forall \omega\subset\subset\Omega\,,
 $$
 and
$$\left (\frac1n+u_{n}\right)^{-\gamma} \longrightarrow \frac{1}{u^{\gamma}} \ \ \text{a.e. and $\ast$-weakly in $L^{\infty}(\omega)$,}\ \forall \omega\subset\subset\Omega\,.
$$

If $\varphi\in \mathcal{D}(\Omega)$, there are no problems in passing to the limit in all terms of the distributional formulation of \rife{pn}, that is in
$$
\begin{array}{l}
\dys\into A(x)\nabla u_{n} \nabla \varphi\ dx=\into \frac{T_{n}(\mu_{a})\varphi}{(\frac1n +u_{n})^{\gamma}}\ dx \\ \qquad \dys+\into \frac{f_{n}\varphi}{(\frac1n +u_{n})^{\gamma}}\ dx+\left\langle G_{n}, \nabla \left(\frac{\varphi}{(\frac1n +u_{n})^{\gamma}}\right)\right\rangle.
\end{array}
$$

Only observe that in this case
$$
 \nabla \left(\frac{\varphi}{(\frac1n +u_{n})^{\gamma}}\right)=\frac{\nabla\varphi}{(\frac1n +u_{n})^{\gamma}}- \gamma\frac{\nabla u_{n}}{\left(\frac1n+u_{n}\right)^{\gamma+1}}\varphi, 
$$
and we can apply again Theorem \ref{ego} in order to deal with the second term. We then pass to the limit in \rife{pass} and we get
$$
\into A(x)\nabla u \nabla \varphi\ dx=\into \frac{\varphi}{u^{\gamma}}d\mu, \ \ \ \text{for any}\ \ \varphi\in\mathcal{D}(\Omega). 
$$

{\it Proof of Theorem \ref{exi} in the case $\gamma<1$.} Now, let $q=\frac{N(1+\gamma)}{N-1+\gamma}$. In this case, using Proposition \ref{mp} we have that
$$
\mu_{s,n}=f_{n}-{\text\rm \div}(G_{n})\ \ \text{in}\ \ \mathcal{D}'(\Omega)\,,$$
where $f_{n}$ belongs to $L^{2}(\Omega)$ and weakly converges to $f$ in $L^{1}(\Omega)$ and $G_{n}$ strongly converges to $G$ in $(L^{q'}(\Omega))^{N}$, with $\mu_{s}=f-\div(G)$. 

From Proposition \ref{stime} and Lemma \ref{pos} we now deduce that there exists a function $u\in W^{1,q}_{0}(\Omega)$ such that, up to subsequences, 
$$
 u_{n}\longrightarrow u \ \ \text{a.e. and weakly in}\ W^{1,q}_{0}(\Omega)\,, \ \forall \omega\subset\subset\Omega\,,
 $$
 and
$$\left (\frac1n+u_{n}\right)^{-\gamma} \longrightarrow \frac{1}{u^{\gamma}} \ \ \text{a.e. and $\ast$-weakly in $L^{\infty}(\omega)$,}\ \forall \omega\subset\subset\Omega\,.
$$

If $\varphi\in \mathcal{D}(\Omega)$, we pass to the limit in the distributional formulation of \rife{pn}, as before 
and we get
$$
\into A(x)\nabla u \nabla \varphi\ dx=\into \frac{\varphi}{u^{\gamma}}d\mu, \ \ \ \text{for any}\ \ \varphi\in\mathcal{D}(\Omega). 
$$
In fact, observe that we have
$$
 \nabla \left(\frac{\varphi}{(\frac1n +u_{n})^{\gamma}}\right)=\frac{\nabla\varphi}{(\frac1n +u_{n})^{\gamma}}- \gamma\frac{\nabla u_{n}}{\left(\frac1n+u_{n}\right)^{\gamma+1}}\varphi, 
$$
and we can use again Theorem \ref{ego} in order to get the weak convergence of the second term in $(L^{q}(\Omega))^{N}$.

\end{proof}

\section{The case $\mu\perp\mathcal{L}$} 
\label{sing}

This section is devoted to the study of the case $\mu\perp\mathcal{L}$; that is, $\mu_{a}=0$. This situation is much more delicate and can not be faced with the elementary techniques of the previous sections. We will develop a refined monotonicity argument which is based on a monotone approximation for diffuse measures. We also stress that the following argument, with suitable modifications, works for any diffuse measure (in the sense specified in Theorem \ref{exi}) not only the ones concentrated on a set of zero Lebesgue measure. We will prove existence of a solution in full generality in the case $\gamma\geq 1$, while in the case $\gamma<1$ there is a ``gap'' between what one expects, and what we can prove. 

\medskip 

So, given $\mu\perp\mathcal{L}$, we consider problem
\begin{equation}\label{gammaTOT}
\begin{cases}
 \dys -\div (A(x)\nabla u) = \frac{\mu}{u^{\gamma}} & \text{in}\ \Omega,\\
 u=0 &\text{on}\ \partial\Omega,
 \end{cases}
\end{equation}
where $\Omega$ is an open bounded set of $\rn$, $A$ is an elliptic matrix satisfying \rife{a1} and \rife{a2}, and $\gamma>0$.

We will use several technical results. The first one is a corollary of a result in \cite{dm} (see also \cite{bgo}) that we state as in \cite[Lemme 4.2]{bp} (see also \cite{ponce}). 
\begin{lemma}\label{bapi}
Let $\mu$ be a nonnegative diffuse measure with respect to the $p$-capacity. Then there exists an increasing sequence of nonnegative increasing measures $\mu_{n}\in W^{-1,p'}(\Omega)$ such that $\mu_{n}$ converges to $\mu$ strongly in $\mathcal{M}(\Omega)$. 
\end{lemma}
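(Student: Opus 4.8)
The plan is to realise $\mu$ as a monotone \emph{level--set truncation of its own potential}, rather than a truncation of the measure or of the potential directly. Since $\mu$ is a nonnegative finite measure that is diffuse with respect to the $p$-capacity, I would first introduce the $p$-potential $w\geq 0$ of $\mu$, i.e. the ($p$-superharmonic) solution of $-\Delta_{p}w:=-\div(|\nabla w|^{p-2}\nabla w)=\mu$ in $\Omega$ with $w=0$ on $\partial\Omega$ (for $p=2$ simply $w=G\mu$, the Green potential of the operator). By the nonlinear potential theory of \cite{hkm}, $w$ is quasi-continuous and finite quasi-everywhere, so $\{w=+\infty\}$ has zero $p$-capacity. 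The candidate approximating sequence is then $\mu_{n}:=\mu\LL\{w<n\}$.

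With this choice, nonnegativity and monotonicity are immediate: $\mu_{n}\geq0$, and since $\{w<n\}\uparrow\{w<+\infty\}$ as $n\to\infty$ the measures $\mu_{n}$ increase and satisfy $\mu_{n}\leq\mu$. For the strong convergence in $\mathcal{M}(\Omega)$ I would estimate $\|\mu-\mu_{n}\|_{\mathcal{M}(\Omega)}=\mu(\{w\geq n\})\to\mu(\{w=+\infty\})=0$, and this is exactly where diffuseness enters: $\{w=+\infty\}$ has zero $p$-capacity, hence is $\mu$-null. This already yields every property in the statement \emph{except} membership in $W^{-1,p'}(\Omega)$.

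The step I expect to be the real obstacle is precisely $\mu_{n}\in W^{-1,p'}(\Omega)$: one must convert the cut-off level $n$ into a finite-energy bound. Let $v_{n}\geq0$ be the potential of $\mu_{n}$. By the comparison principle for $-\Delta_{p}$ (see \cite{hkm}) one has $v_{n}\leq w$, because $\mu_{n}\leq\mu$. Testing the equation for $v_{n}$ with $v_{n}$ itself and using $v_{n}\leq w<n$ on $\supp\mu_{n}\subseteq\{w<n\}$ gives $\into|\nabla v_{n}|^{p}\,dx=\into v_{n}\,d\mu_{n}=\int_{\{w<n\}}v_{n}\,d\mu\leq\int_{\{w<n\}}w\,d\mu\leq n\,\mu(\Omega)<\infty$. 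Hence $v_{n}\in W^{1,p}_{0}(\Omega)$ and therefore $\mu_{n}=-\Delta_{p}v_{n}\in W^{-1,p'}(\Omega)$. For $p=2$ this is completely transparent, since $\into v_{n}\,d\mu_{n}$ is the mutual Green energy $\iint G\,d\mu_{n}\,d\mu_{n}\leq\int_{\{w<n\}}w\,d\mu\leq n\,\mu(\Omega)$.

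I stress that it is this particular construction — cutting $\mu$ along the level sets of $w$ — that makes monotonicity and $W^{-1,p'}$-regularity coexist: a direct truncation of the measure would in general leave $W^{-1,p'}(\Omega)$, while the hard truncation $-\Delta_{p}\min(w,n)$ would introduce extra singular mass on the level sets $\{w=n\}$ and so destroy monotonicity. The remaining technical care — existence, comparison, and the energy identity for the $p$-potential of a measure not necessarily in $W^{-1,p'}(\Omega)$, together with the finiteness of $w$ quasi-everywhere — is exactly what is supplied by \cite{hkm,dm,bp}, which is why we import the statement as a lemma.
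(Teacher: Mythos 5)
The paper does not actually prove this lemma: it imports it verbatim from \cite[Lemme 4.2]{bp} and \cite{dm}, so any honest argument here is ``different from the paper's''. Your construction --- restricting $\mu$ to the sublevel sets of its own potential, $\mu_{n}=\chi_{\{w<n\}}\,\mu$ --- is essentially the classical Baras--Pierre/Ponce proof for $p=2$, extended to general $p$ via nonlinear potential theory. The soft parts are right: nonnegativity and monotonicity are immediate, and diffuseness enters exactly once, through $\capp(\{w=+\infty\})=0$, to give $\|\mu-\mu_{n}\|_{\mathcal{M}(\Omega)}=\mu(\{w\geq n\})\to 0$. You also correctly identify that the hard truncation $-\Delta_{p}\min(w,n)$ would deposit extra mass on $\{w=n\}$ and break monotonicity, which is why one must cut the measure along the level sets rather than truncate the potential.

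Two points in the $W^{-1,p'}$ step need repair. First, ``testing the equation for $v_{n}$ with $v_{n}$ itself'' is circular: the identity $\into|\nabla v_{n}|^{p}\,dx=\into v_{n}\,d\mu_{n}$ presupposes $v_{n}\in\sob$, which is the conclusion. The fix is to test with $T_{k}(v_{n})$ (admissible for the potential-theoretic/renormalized solution), obtaining
$$
\into|\nabla T_{k}(v_{n})|^{p}\,dx\leq\int_{\{w<n\}}T_{k}(v_{n})\,d\mu\leq\int_{\{w<n\}}w\,d\mu\leq n\,\mu(\Omega)
$$
uniformly in $k$, and then let $k\to\infty$. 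Second, for $p\neq 2$ the comparison $v_{n}\leq w$ is not a consequence of a general comparison principle for measure-data problems (uniqueness of such solutions is open in general); it does hold for the specific potentials of \cite{hkm} and Kilpel\"ainen--Mal\'y, but you should say so, or bypass the issue altogether: the Riesz measure of $\min(w,n)\in\sob\cap L^{\infty}(\Omega)$ is a nonnegative element of $W^{-1,p'}(\Omega)$ dominating $\chi_{\{w<n\}}\mu$, and a nonnegative measure dominated by a nonnegative measure in $W^{-1,p'}(\Omega)$ is itself in $W^{-1,p'}(\Omega)$. With either repair the proof is complete, and it is a genuine, self-contained alternative to the paper's citation.
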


We will also use the following consequence of a standard capacitary result that can be found in \cite[Proposition 2.7]{dmop} (see also \cite{hkm}). Recall that, if $u\in W^{1,p}(\Omega)$ then one can consider its ${\rm cap}_{p}$-quasicontinuous representative $\tilde{u}$: a function which satisfies 
\begin{itemize}
\item[$a)$] $u=\tilde{u}$ a.e. on $\Omega$,
\item[$b)$] for all $\vare>0$ there exists a set $E$ such that ${\rm cap}_{p}(E)<\vare$ and $\tilde{u}$ is continuous on $\Omega\backslash E$. 
\end{itemize}

From now on we will always refer to the ${\rm cap}_{p}$-quasicontinuous representative of the involved Sobolev functions. 
 
\begin{lemma}\label{bdd}
Let $\mu$ be a nonnegative diffuse measure with respect to the $p$-capacity, and let $u\in W^{1,p}_{0}(\Omega)\cap L^{\infty} (\Omega)$ be a nonnegative function. Then, up to the choice of its cap$_{p}$-quasicontinuous representative, $u$ belongs to $L^{\infty}(\Omega, \mu)$ and
$$
\int_{\Omega}u\ d\mu\leq \|u\|_{L^{\infty}(\Omega)}\mu(\Omega)\,.
$$
\end{lemma}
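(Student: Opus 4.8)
The plan is to reduce the inequality to a pointwise bound for the $\capp$-quasicontinuous representative $\tilde{u}$ of $u$, valid up to a set of zero $p$-capacity, and then to exploit the fact that a diffuse measure does not charge such sets. Throughout I would set $M=\|u\|_{L^{\infty}(\Omega)}$, and I recall that the integral $\into u\,d\mu$ is understood as the integral of $\tilde u$, which is the content of the identification in the statement.

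First I would invoke the standard capacitary comparison principle (see \cite{hkm}): if $v\in W^{1,p}(\Omega)$ satisfies $v\geq 0$ a.e.\ in $\Omega$, then its quasicontinuous representative satisfies $\tilde{v}\geq 0$ quasi-everywhere, i.e.\ outside a set of zero $p$-capacity. This follows because, for $\capp$-q.e.\ $x$, one has $\tilde v(x)=\lim_{r\to 0}\fint_{B_{r}(x)}v\,dy$, and every such average is nonnegative when $v\geq 0$ a.e. Applying this to $v=u$ (nonnegative a.e.\ by hypothesis) gives $\tilde{u}\geq 0$ q.e., and applying it to $v=M-u$ (nonnegative a.e.\ by definition of $M$) gives $\tilde{u}\leq M$ q.e. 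Hence
$$
0\leq \tilde{u}\leq M\qquad \capp\text{-q.e. in }\Omega.
$$
Next, since $\mu$ is diffuse with respect to the $p$-capacity, every set of zero $p$-capacity is $\mu$-negligible, so the previous estimate upgrades to $0\leq \tilde{u}\leq M$ $\mu$-a.e. Moreover, by \cite[Proposition 2.7]{dmop} the quasicontinuous function $\tilde{u}$ is $\mu$-measurable (this is precisely where diffuseness guarantees that $\tilde{u}$ is well defined and measurable $\mu$-a.e.), so that $\tilde{u}\in L^{\infty}(\Omega,\mu)$ with $\|\tilde{u}\|_{L^{\infty}(\Omega,\mu)}\leq M$. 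Integrating the bound against the nonnegative measure $\mu$ then yields
$$
\into u\,d\mu=\into \tilde{u}\,d\mu\leq M\,\mu(\Omega)=\|u\|_{L^{\infty}(\Omega)}\,\mu(\Omega),
$$
which is the claimed inequality.

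The only genuinely delicate step will be the first one: transferring the $L^{\infty}$ bound, which is an almost-everywhere statement with respect to the Lebesgue measure, into a quasi-everywhere statement for the quasicontinuous representative. This is exactly what makes the pointwise values of $u$ meaningful against a measure $\mu$ that may be concentrated on a set of zero Lebesgue measure, and it is the reason the capacitary comparison principle (rather than a naive Lebesgue argument) is indispensable. Once this transfer is in place, the diffuseness of $\mu$ converts the q.e.\ bound into a $\mu$-a.e.\ bound, and the remaining integration is routine.
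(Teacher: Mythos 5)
Your argument is correct and is exactly the standard capacitary argument the paper leaves implicit: the paper states Lemma \ref{bdd} without proof, presenting it as a direct consequence of \cite[Proposition 2.7]{dmop} (see also \cite{hkm}), which is precisely the chain you spell out (the a.e.\ bounds $0\leq u\leq \|u\|_{L^\infty(\Omega)}$ transfer to $\capp$-q.e.\ bounds on the quasicontinuous representative, diffuseness upgrades q.e.\ to $\mu$-a.e., and quasicontinuity gives $\mu$-measurability). No discrepancy to report.
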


Here is the main existence result of this section:
\begin{theorem}\label{exiTOT}
 Let either $\gamma\geq 1$ and $\mu$ be a diffuse measure with respect to the $2$-capacity or $0<\gamma<1$ and $\mu$ be a diffuse measure with respect to the $q$-capacity, with $q=\frac{N(\gamma+1)}{N-1+\gamma}$ . Then there exists a distributional solution for problem \rife{gammaTOT}.
\end{theorem}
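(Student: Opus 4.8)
The plan is to reduce the singular case $\mu\perp\mathcal{L}$ to the already-established machinery of Theorem~\ref{exi} by exploiting the monotone approximation of Lemma~\ref{bapi}. First I would invoke Lemma~\ref{bapi} to obtain an increasing sequence of nonnegative measures $\mu_n\in W^{-1,p'}(\Omega)$ (with $p=2$ if $\gamma\geq 1$, and $p=q$ if $\gamma<1$) converging strongly to $\mu$ in $\mathcal{M}(\Omega)$. For each fixed $n$, the datum $\mu_n$ is a nonnegative diffuse measure with a nontrivial absolutely continuous part (being in $W^{-1,p'}(\Omega)$ it need not have $\mu_{n,a}\equiv 0$, but one can always add a small fixed $L^\infty$ perturbation, or simply note that the construction in Lemma~\ref{bapi} produces measures regular enough), so Theorem~\ref{exi} yields a distributional solution $v_n$ to the problem with datum $\mu_n$. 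The idea is then to pass to the limit $n\to\infty$ in the sequence $\{v_n\}$.

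The crucial structural point that makes the singular case tractable is \emph{monotonicity}: because the sequence $\{\mu_n\}$ is increasing and the nonlinearity $s\mapsto s^{-\gamma}$ is decreasing, a comparison argument analogous to the one in Lemma~\ref{pos} should show that $\{v_n\}$ is itself (essentially) increasing. Concretely, I would take $(v_n-v_{n+1})^+$ as a test function in the two (approximating) formulations, subtract, and use ellipticity \rife{a1} together with $\mu_n\leq\mu_{n+1}$ to conclude $v_n\leq v_{n+1}$ a.e. This monotonicity replaces the uniform lower barrier of Lemma~\ref{pos}, which is no longer available when $\mu_a=0$; in particular it provides the strict-positivity-on-compact-subsets property $v_n\geq v_1\geq c_\omega>0$ on each $\omega\subset\subset\Omega$, inherited from the first (nontrivial) member of the sequence. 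I would then establish uniform \emph{a priori} estimates on $\{v_n\}$ of the type collected in Proposition~\ref{stime}: the key is that $\|\mu_n\|_{\mathcal{M}(\Omega)}$ is bounded (by strong convergence in $\mathcal{M}(\Omega)$), so the same computations (testing with appropriate powers of $v_n$, or with $T_k(v_n)$) give bounds in $\huz$, in $H^1_{\rm loc}(\Omega)$, or in $W^{1,q}_0(\Omega)$ according to the value of $\gamma$. Monotone convergence plus these bounds yield a limit $u$ with the correct Sobolev regularity and with $v_n\uparrow u$ a.e., hence $v_n^{-\gamma}\to u^{-\gamma}$ a.e. on every compact subset.

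The main obstacle, and the step requiring genuine care, is passing to the limit in the right-hand side
$$
\into \frac{\varphi}{v_n^{\gamma}}\,d\mu_n \longrightarrow \into \frac{\varphi}{u^{\gamma}}\,d\mu
$$
for $\varphi\in\mathcal{D}(\Omega)$, since here \emph{both} the measure and the integrand vary, and the measures are now possibly concentrated on sets of zero Lebesgue measure so one cannot simply argue a.e. This is exactly where Lemma~\ref{bdd} enters: writing the test object as $\varphi\,v_n^{-\gamma}$, which is a nonnegative function in $W^{1,p}_0(\Omega)\cap L^\infty(\Omega)$ (thanks to the local lower bound $v_n\geq c_\omega$ on $\supp\varphi$), I would control $\int \varphi\, v_n^{-\gamma}\, d\mu_n$ through its cap$_p$-quasicontinuous representative. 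The strategy is to split the difference between the $n$-th term and the limit into one piece where the \emph{measure} converges ($\mu_n\to\mu$ strongly in $\mathcal{M}$, handled against the uniformly bounded quasicontinuous integrand via Lemma~\ref{bdd}) and one piece where the \emph{integrand} converges (monotone/dominated convergence for $\varphi\,v_n^{-\gamma}\to\varphi\,u^{-\gamma}$ in $L^\infty(\Omega,\mu)$, using again quasicontinuity and the a.e.\ convergence of $v_n$ together with the capacitary estimate). The delicate point is that convergence $v_n^{-\gamma}\to u^{-\gamma}$ must be upgraded from a.e.\ (Lebesgue) to convergence $\mu$-a.e., which is precisely the role of the quasicontinuous representatives and the diffuse character of $\mu$: a diffuse measure does not see the cap$_p$-null exceptional set, so the capacitary convergence of the Sobolev functions $v_n$ transfers to $\mu$-a.e.\ convergence. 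Once this right-hand side convergence is secured, passing to the limit in the left-hand side is routine (weak convergence of gradients in the appropriate space), and the Sobolev regularity and strict positivity of $u$ deliver a solution in the sense of Definition~\ref{defi}.
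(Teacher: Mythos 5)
Your overall architecture (monotone approximation of $\mu$ via Lemma~\ref{bapi}, comparison to get an increasing, locally strictly positive sequence, uniform estimates from $\|\mu_n\|_{\mathcal M(\Omega)}\leq C$, and a splitting of the right-hand side into a ``measure varies'' piece and an ``integrand varies'' piece) is the same as the paper's. But the way you set up the approximating sequence has a genuine gap. You propose to obtain each $v_n$ by applying Theorem~\ref{exi} to the datum $\mu_n$; that theorem, however, requires $\mu_{n,a}\not\equiv 0$, and a nonnegative measure in $W^{-1,p'}(\Omega)$ can perfectly well be purely singular with respect to Lebesgue measure (surface measures on hypersurfaces, for instance), so the hypothesis can fail for every $n$. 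Your suggested fix --- adding a fixed $L^\infty$ perturbation --- changes the limit datum, while a vanishing perturbation destroys the monotonicity of the data that your whole comparison rests on. Moreover, even granting existence of the $v_n$, testing the two \emph{singular} problems with $(v_n-v_{n+1})^{+}$ is not admissible: solutions in the sense of Definition~\ref{defi} only admit test functions in $\mathcal{D}(\Omega)$, and for $\gamma>1$ the $v_n$ are merely in $H^1_{\rm loc}(\Omega)$, so $(v_n-v_{n+1})^{+}$ need not even lie in $\huz$. The paper avoids both problems with a single-level approximation: since $q'(\gamma)\geq 2$, each $\mu_n$ lies in $H^{-1}(\Omega)$ and one solves the \emph{regularized} problem $-\div(A(x)\nabla u_n)=\mu_n/(\tfrac1n+u_n)^{\gamma}$ directly by Schauder's theorem, obtaining $u_n\in\huz$; the comparison $(u_n-u_{n+1})^{+}$ is then legitimate, and strict positivity comes from the strong maximum principle applied to $u_1$ rather than from any lower barrier.

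The second gap is in the limit of $\into \varphi\,(\tfrac1n+u_n)^{-\gamma}\,d\mu$. You argue that Lebesgue-a.e.\ convergence of $u_n$ upgrades to $\mu$-a.e.\ convergence via quasicontinuous representatives because $\mu$ is diffuse. That implication is not automatic: a.e.\ convergence plus \emph{weak} Sobolev convergence does not yield cap$_p$-quasi-everywhere convergence of the quasicontinuous representatives without extra work (one typically needs strong convergence, or a Mazur-lemma argument, to extract a q.e.\ convergent subsequence). The paper sidesteps this entirely by using the decomposition $\mu=f-\div(G)$ with $f\in L^1(\Omega)$ and $G\in (L^{q'(\gamma)}(\Omega))^{N}$: the pairing $\langle \mu,\varphi(\tfrac1n+u_n)^{-\gamma}\rangle$ is then handled exactly as in the proof of Theorem~\ref{exi}, using the $\ast$-weak $L^\infty$ convergence of $\varphi(\tfrac1n+u_n)^{-\gamma}$, its weak convergence in $W^{1,q(\gamma)}_0(\Omega)$, and Theorem~\ref{ego} for the $f$ part. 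Your treatment of the other piece, $\into\varphi(\tfrac1n+u_n)^{-\gamma}\,d(\mu_n-\mu)$, via Lemma~\ref{bdd} and the uniform local lower bound, does match the paper.
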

 
\begin{remark}
The result in the case $\gamma < 1$ is not optimal. In fact, in the case of a measure $\mu$ concentrated on a set of zero Lebesgue measure, we need to restrict the set of admissible measures to those in $\mathcal{M}^{q}_{0}(\Omega)$ rather than $\mathcal{M}^{q'}_{0}(\Omega)$ in order to prove our existence result. This reflects a standard difficulty which arises from a lack of suitable regularity results when the datum involves measures in $W^{-1,q}(\Omega)$ with $q < 2$. For some further comments on this fact we refer the reader to \cite{bo97} (and also \cite{orpr}).
\end{remark}

\begin{proof}[Proof of Theorem \ref{exiTOT}]
As in the proof of Theorem \ref{exi} we will distinguish between the three cases $\gamma = 1$, $\gamma > 1$ and $\gamma < 1$, adopting the strategy of approximating the problem, proving a strict positivity result for the solutions of these problems, as well as some {\sl a priori} estimates, and then passing to the limit.

In what follows, we define
$$
q(\gamma) = 
\begin{cases}
\hfill 2 \hfill & \mbox{if $\gamma \geq 1$,} \\
\hfill \frac{N(\gamma+1)}{N-1+\gamma} \hfill & \mbox{if $0 < \gamma < 1$,}
\end{cases}
\qquad
q'(\gamma) = 
\begin{cases}
\hfill 2 \hfill & \mbox{if $\gamma \geq 1$,} \\
\hfill \frac{N(\gamma+1)}{(N-1)\gamma + 1} \hfill & \mbox{if $0 < \gamma < 1$.}
\end{cases}
$$

Let $\mu$ be a nonnegative diffuse measure with respect to the $q(\gamma)$ capacity, and let $\{\mu_{n}\}$ be an increasing sequence of nonnegative measures in $W^{-1,q'(\gamma)}$, given by Lemma \ref{bapi}, strongly converging to $\mu$. Since $q'(\gamma) \geq 2$, thanks to Schauder theorem it can be easily proved that for every $n$ in $\mathbb{N}$ there exists a (unique) positive solution $u_{n}$ in $\huz$ of
\begin{equation}\label{gammagamma}
\begin{cases}
\dys
-\div (A(x)\nabla u_{n}) = \frac{\mu_{n}}{\big(\frac{1}{n} + u_{n})^{\gamma}} & \text{in}\ \Omega,\\
u_{n}=0 & \text{on}\ \partial\Omega\,.
\end{cases}
\end{equation}

We begin by choosing $v = (u_{n} - u_{n+1})^{+}$ as test function in \eqref{gammagamma}, a choice which is possible since $v$ belongs to $\huz$. We obtain, after subtracting the equations for $u_{n}$ and $u_{n+1}$, using \eqref{a1}, and recalling that $\mu_{n}$ is increasing,
$$
\begin{array}{l}
\dys
\alpha \into |\nabla (u_{n}-u_{n+1})^{+}|^{2} dx \\
\dys
\quad
\leq
\into \left( \frac{d\mu_{n}}{(\frac1n + u_{n})^{\gamma}} - \frac{d\mu_{n+1}}{(\frac{1}{n+1} + u_{n+1})^{\gamma}}\right) (u_{n}-u_{n+1})^{+} \\
\dys
\quad
\leq
\into \left( \frac{1}{(\frac1n + u_{n})^{\gamma}} - \frac{1}{(\frac{1}{n+1} + u_{n+1})^{\gamma}}\right)(u_{n}-u_{n+1})^{+}\ d\mu_{n+1} \\
\dys
\quad
\leq \into \left( \frac{1}{(\frac{1}{n+1} + u_{n})^{\gamma}} - \frac{1}{(\frac{1}{n+1} + u_{n+1})^{\gamma}}\right)(u_{n}-u_{n+1})^{+}\ d\mu_{n+1}
\leq 0\,.
\end{array}
$$
Thus, we have proved that
\begin{equation}\label{increase}
\mbox{$0 \leq u_{n} \leq u_{n+1}$ for every $n$ in $\mathbb{N}$.}
\end{equation}

We now remark that $\mu_{1} / (1 + u_{1})^{\gamma}$ is not identically zero and nonnegative. Then, by the strong maximum principle we have that, for every $\omega\subset\subset\Omega$, there exists $c_{\omega} > 0$ such that $u_{1}\geq c_{\omega}$ in $\omega$. Therefore, we have proved that
\begin{equation}\label{dasotto}
\mbox{for every $\omega\subset\subset\Omega$ there exists $c_{\omega} > 0:\ u_{n}\geq c_{\omega}$ in $\omega$, for every $n$ in $\mathbb{N}$.}
\end{equation}

We now turn to {\sl a priori} estimates.

If $\gamma = 1$, choosing $u_{n}$ as test function (which can be done since $u_{n}$ belongs to $\huz$, and the right hand side to the dual space $H^{-1}(\Omega)$) we obtain, after using \eqref{a1}, that
$$
\alpha \int_{\Omega} |\nabla u_{n}|^{2} 
\leq
\int_{\Omega} \frac{u_{n}}{\frac1n + u_{n}}d\mu_{n}
\leq
\mu_{n}(\Omega)
\leq C\,,
$$
where in the last passage we have used Lemma \ref{bdd}.
Therefore, we have proved that
\begin{equation}\label{aprioug1}
\mbox{the sequence $\{u_{n}\}$ is bounded in $\huz = W^{1,q(1)}_{0}(\Omega)$.}
\end{equation}

If $\gamma > 1$, let $k > 0$ and choose $v = T_{k}(u_{n})^{\gamma}$ as test function in \eqref{gammagamma}; note that this choice is admissible since $v$ belongs to $\huz$ and the right hand side is in the dual space $H^{-1}(\Omega)$. Using again \eqref{a1} and Lemma \ref{bdd}, we have
$$
\alpha\gamma\into|\nabla T_{k}(u_{n})|^{2}T_{k}(u_{n})^{\gamma-1}\,dx \leq \into \frac{T_{k}(u_{n})^{\gamma}}{(\frac1n+u_{n})^{\gamma}}\ d\mu_{n}
\leq
\mu_{n}(\Omega) \leq C\,. 
$$
Thus, we have proved that
$$
\frac{4\alpha\gamma}{(\gamma+1)^{2}}\into |\nabla T_{k}(u_{n})^{\frac{\gamma+1}{2}}|^{2} dx\leq C\,,
$$
which implies, letting $k$ tend to infinity, that
$$
\into |\nabla u_{n}^{\frac{\gamma+1}{2}}|^{2} dx\leq C\,.
$$
The proof of the $H^{1}_{{\rm loc}}$ estimate now follows exactly as in \cite[Lemma 4.1]{bo} with straightforward modifications. Only observe that, in order to handle the source term we use Lemma \ref{bdd} and \eqref{dasotto} to have that 
$$
\into \frac{u_{n}\varphi^{2}}{(\frac1n+u_{n})^{\gamma}}\ d\mu_{n}\leq \frac{1}{c_{\omega}^{\gamma-1}}\mu(\Omega)\,, 
$$
where ${\rm supp}(\varphi)\subset\omega\subset\subset\Omega$. Therefore, we have proved that
\begin{equation}\label{aprioma1}
\mbox{the sequence $\{u_{n}\}$ is bounded in $H^{1}_{{\rm loc}}(\Omega) = W^{1,q(\gamma)}_{{\rm loc}}(\Omega)$.}
\end{equation}

If $\gamma < 1$, we let $0 < \vare < \frac1n$ and choose $v = (\vare+u_{n})^{\gamma}-\vare^{\gamma}$ as test function in \eqref{gammagamma}; this is allowed as $v$ belongs to $\huz$, and $\mu_{n}$, being in $W^{-1,q'(\gamma)}(\Omega)$, with $q'(\gamma) > 2$, is in $H^{-1}(\Omega)$. Using \eqref{a1} as before, as well as Lemma \ref{bdd}, we obtain
\begin{equation}\label{5.2}
\alpha\gamma\into |\nabla u_{n}|^{2} (\vare+u_{n})^{\gamma-1}\ dx
\leq
\into \frac{(\vare+u_{n})^{\gamma}-\vare^{\gamma}}{(\frac1n +u_{n})^{\gamma}}\ d\mu_{n}
\leq
\mu_{n}(\Omega) \leq C\,.
\end{equation}
Now we apply Sobolev inequality and then let $\vare$ tend to zero to obtain
$$
\into u_{n}^{\frac{2^{\ast}(\gamma+1)}{2}}\leq C\,, 
$$
so that $\{u_{n}\}$ is bounded in $L^{s}(\Omega)$, with $s = \frac{N(\gamma+1)}{N-2}$.

Coming back to \rife{5.2} we use H\"{o}lder inequality to obtain
$$
\begin{array}{l}
\dys
\into |\nabla u_{n}|^{q(\gamma)}\ dx
=
\into \frac{ |\nabla u_{n}|^{q(\gamma)}}{(\vare+u_{n})^{\frac{q(\gamma)(1-\gamma)}{2}}}(\vare+u_{n})^{\frac{q(\gamma)(1-\gamma)}{2}} \\
\dys
\quad
\leq
\left( \into |\nabla u_{n}|^{2}(\vare +u_{n})^{\gamma-1} \right)^{\frac{q(\gamma)}{2}}\left(\into (\vare+u_{n})^{s}\right)^{1-\frac{q(\gamma)}{2}}\leq C\,.
\end{array}
$$
Thus, we have proved that
\begin{equation}\label{apriomi1}
\mbox{the sequence $\{u_{n}\}$ is bounded in $W^{1,q(\gamma)}_{0}(\Omega)$.}
\end{equation}

Summing up the results of \eqref{aprioug1}, \eqref{aprioma1}, and \eqref{apriomi1}, we have that $\{u_{n}\}$ is bounded in $W^{1,q(\gamma)}_{\rm loc}(\Omega)$ (and actually more, if $\gamma \leq 1$). Thus, also using \eqref{increase}, $u_{n}$ weakly converges to some positive function $u$ in $W^{1,q(\gamma)}_{\rm loc}(\Omega)$. Choosing $\vp$ in $\mathcal{D}(\Omega)$ as test function in \eqref{gammagamma}, we have that
$$
\into A(x) \nabla u_{n} \cdot \nabla \vp
=
\into \frac{\vp}{(\frac1n + u_{n})^{\gamma}}d\mu_{n}\,.
$$
Using the (local) weak convergence of $\nabla u_{n}$ in $L^{q(\gamma)}(\Omega)$, we have that
$$
\lim_{n \to +\infty} \into A(x) \nabla u_{n} \cdot \nabla \vp
=
\into A(x) \nabla u \cdot \nabla \vp\,,
$$
so that it only remains to pass to the limit in the right hand side to conclude the proof of the theorem.

We have
$$
\into\frac{\varphi }{(\frac1n + u_{n})^{\gamma}}\ d\mu_{n} = \into\frac{\varphi}{(\frac1n + u_{n})^{\gamma}}\ d(\mu_{n}-\mu)+ \into\frac{\varphi }{(\frac1n + u_{n})^{\gamma}}\ d\mu\,,
$$
and we will separately analyze the two terms on the right hand side of the above identity. First of all, if $\omega$ is an open set compactly contained in $\Omega$ and containing the support of $\vp$, thanks to both Lemma \ref{bapi} and Lemma \ref{bdd}, and to \eqref{dasotto}, we have
$$
\into\frac{\varphi}{(\frac1n + u_{n})^{\gamma}}\ d( \mu_{n}-\mu)\leq \frac{1}{c_{\omega}^{\gamma}}\|\varphi\|_{L^{\infty}(\Omega)}|\mu_{n} - \mu|(\Omega)\stackrel{n\to\infty}{\longrightarrow} 0\,.
$$
On the other hand, since $\mu$ is diffuse with respect to the $q(\gamma)$ capacity, we know that $\mu=f-\div(G)$, 
with $f$ in $L^{1}(\Omega)$ and $G$ in $(L^{q'(\gamma)}(\Omega))^{N}$. Therefore,  reasoning as in the proof of Theorem \ref{exi}, we deduce that $\frac{\varphi}{(\frac1n + u_{n})^{\gamma}}$ converges to $\frac{\varphi}{u^{\gamma}}$ both $\ast$-weakly in $L^{\infty}(\Omega)$ and weakly in $W^{1,q(\gamma)}_{0}(\Omega)$, so that we have 
$$
\lim_{n\to+\infty}\into\frac{\varphi}{(\frac1n + u_{n})^{\gamma}}\ d\mu=\into\frac{\varphi}{u^{\gamma}}\ d\mu\,,
$$
and this concludes the proof.
\end{proof}

\section{A general uniqueness result if $\gamma\geq 1$}\label{u}

In \cite{lm} the uniqueness of a $C^{2+\alpha}(\Omega)\cap C(\overline{\Omega})$ solution for problem 
\begin{equation}\label{unica}
\begin{cases}
 \dys -\Delta u = \frac{\mu}{u^{\gamma}} & \text{in}\ \Omega,\\
 u=0 &\text{on}\ \partial\Omega,\\
 u>0 &\text{on}\ \Omega,
 \end{cases}
\end{equation}
was proved if the case of a strictly positive $\mu\in C^{\alpha}(\overline{\Omega})$ if $\Omega$ is a domain of class $C^{2+\alpha}$, for some $0<\alpha<1$. In our case, for arbitrary $\gamma$, there is no chance to deal with such smooth solutions since, as we have seen, not even local finite energy solutions can be obtained, at least in the case $\gamma<1$. If $\gamma \geq 1$ we show a general uniqueness result for the model problem \rife{unica} in the case of $\mu$ being a diffuse measure. In order to deal also with the case $\gamma>1$ we assume some smoothness on $\Omega$ (e.g. Lipschitz boundary). The proof is based on a Kato's type inequality. 

\begin{theorem}
Let $\gamma \geq 1$ and let $\mu$ be as in Theorem \ref{exi}. Then the solution to problem \rife{unica} is unique. 
\end{theorem}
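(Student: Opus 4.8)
The plan is to show that any two solutions $u$ and $v$ of \rife{unica} coincide by proving $(u-v)^{+}\equiv 0$; by symmetry (exchanging the roles of $u$ and $v$) the same argument yields $(v-u)^{+}\equiv 0$, whence $u=v$. The heart of the matter is that the nonlinearity $s\mapsto s^{-\gamma}$ is strictly decreasing, so that $w:=u-v$ formally satisfies $-\Delta w=\mu\,(u^{-\gamma}-v^{-\gamma})$, whose right hand side is nonpositive precisely on the set $\{u>v\}$. The device that exploits this sign is a Kato type inequality for $w^{+}$, after which $w^{+}$ turns out to be subharmonic and one concludes by a maximum principle adapted to the weak boundary condition \rife{aug}.

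First I would check that the equation for the difference makes sense as a measure equation. By Definition \ref{defi} both $u$ and $v$ are bounded below by a positive constant $c_{\omega}$ on every $\omega\subset\subset\Omega$, so $\mu\,u^{-\gamma}$ and $\mu\,v^{-\gamma}$ are locally finite measures, each absolutely continuous with respect to $\mu$; hence $-\Delta w=\mu\,(u^{-\gamma}-v^{-\gamma})$ is a locally finite signed measure which, being dominated by $\mu$, is diffuse with respect to the $2$-capacity. Working throughout with the quasicontinuous representatives (as agreed before Lemma \ref{bdd}), I would then invoke Kato's inequality in the form
$$
-\Delta\, w^{+}\ \leq\ \chi_{\{u>v\}}\,\big(-\Delta w\big)\ =\ \chi_{\{u>v\}}\,\mu\,\big(u^{-\gamma}-v^{-\gamma}\big)\,.
$$
On $\{u>v\}$ one has $u^{-\gamma}\leq v^{-\gamma}$ and $\mu\geq 0$, so the right hand side is nonpositive. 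Therefore $-\Delta w^{+}\leq 0$, i.e. $w^{+}$ is (locally) subharmonic.

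It remains to deduce $w^{+}\equiv 0$ from the facts that $w^{+}\geq 0$, $w^{+}$ is subharmonic, and $w^{+}$ vanishes at $\partial\Omega$ in the averaged sense \rife{aug} (which holds for $w^{+}$ since $0\leq w^{+}\leq u$ and $u$ satisfies \rife{aug} by Remark \ref{augu}). For this I would use the torsion function $\psi$ solving $-\Delta\psi=1$ in $\Omega$, $\psi=0$ on $\partial\Omega$, which for a Lipschitz domain is comparable to $\mathrm{dist}(\cdot,\partial\Omega)$ near the boundary. Testing the subharmonicity of $w^{+}$ against $\psi$ and integrating by parts on the exhausting domains $\{\mathrm{dist}(\cdot,\partial\Omega)>\vare\}$ gives, formally,
$$
\intO w^{+}\,dx\ =\ \intO \psi\,(-\Delta w^{+})\,dx\ +\ (\text{boundary layer terms})\ \leq\ (\text{boundary layer terms})\,,
$$
and these boundary terms are controlled, via the coarea formula and the comparison $\psi\simeq\mathrm{dist}$, by quantities of the form $\tfrac1\vare\int_{\{\mathrm{dist}<\vare\}}w^{+}$, which tend to $0$ along a suitable sequence $\vare\to 0$ by \rife{aug}. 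Hence $\intO w^{+}\,dx\leq 0$, forcing $w^{+}\equiv0$. The main obstacle is exactly this last quantitative step: making the maximum principle rigorous with only the weak boundary information \rife{aug} and the low global regularity available for $\gamma>1$ (where $u,v$ lie only in $H^{1}_{\mathrm{loc}}(\Omega)$, with $u^{\frac{\gamma+1}{2}},v^{\frac{\gamma+1}{2}}\in\huz$). This is precisely where the Lipschitz regularity of $\Omega$ enters, guaranteeing $\psi\simeq\mathrm{dist}(\cdot,\partial\Omega)$ and allowing the boundary-layer integrals produced by the integration by parts to be absorbed by the averaged boundary quantity in \rife{aug}. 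A secondary technical point is the justification of Kato's inequality itself in the present diffuse measure-data setting, which requires handling everything at the level of $\mathrm{cap}_{2}$-quasicontinuous representatives; for $\gamma=1$, where $u,v\in\huz$, this step is considerably simpler and the whole argument can be run globally.
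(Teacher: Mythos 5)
Your proposal is correct and follows essentially the same route as the paper: a Kato-type inequality yielding $-\Delta w^{+}\leq 0$ from the monotonicity of $s\mapsto s^{-\gamma}$ on the set $\{u>v\}$, followed by a weak maximum principle that exploits the averaged boundary condition \rife{aug} (with the $\gamma=1$ case handled globally since $w\in W^{1,1}_{0}(\Omega)$). The only differences are presentational: the paper derives the Kato step by hand, testing with $\frac{1}{\vare}T_{\vare}(w)^{+}\varphi$ and letting $\vare\to 0$, and delegates your torsion-function/boundary-layer argument to Propositions 5.1--5.2 of \cite{ponce}.
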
 
\begin{proof}
Let $u$ and $v$ be two solutions to problem \rife{unica} and consider $w=u-v$. We will prove that $u\leq v$, the reverse inequality being similar. Consider 
 a nonnegative $\varphi\in \mathcal{D}(\Omega)$, and choose $\frac{1}{\vare}T_{\vare}(w)^{+}\varphi$ as test function in the weak formulations of both $u$ and $v$; subtracting, we obtain
$$
\begin{array}{l}
\dys\frac{1}{\vare}\int_{\{w^{+}\leq\vare\}} |\nabla w|^{2}\varphi\ dx +\frac{1}{\vare}\into \nabla w\nabla\varphi T_{\vare}(w)^{+}\ dx \\ \qquad
\dys =\frac{1}{\vare}\into \mu\varphi \left(\frac{1}{u^{\gamma}}-\frac{1}{v^{\gamma}}\right)T_{\vare}(w)^{+}\ dx\,;
\end{array}
$$
now we drop the first nonnegative term and we let $\vare\to 0$ so that, observing that $(u^{-\gamma}-v^{-\gamma}){\rm sign} (w)\leq 0$, we get
\begin{equation}\label{distri}
-\Delta w^{+}\leq 0\ \ \text{in}\ \ \mathcal{D}'(\Omega)\,.
\end{equation}
If $\gamma=1$, this proves the result as $w\in W^{1,1}_{0}(\Omega)$ (see for instance \cite[Proposition 4.B.1]{bmp}).

If $\gamma>1$, recalling \rife{aug} in Remark \ref{augu}, we deduce
$$
\lim_{\vare\to 0^{+}}\frac{1}{\vare}\int_{\{x:{\rm dist}(x,\partial\Omega)<\vare\}} w^{+} (x) \, dx=0,
$$
that together with \rife{distri} allows us to apply Proposition 5.2 of \cite{ponce} in order to get that 
$$
-\Delta w^{+}\leq 0\ \ \text{in}\ \ (C^{\infty}_{0}(\overline{\Omega}))'\,,
$$
where $C^{\infty}_{0}(\overline{\Omega})$ is the set of functions in $C^{\infty}(\overline{\Omega})$ that vanish on $\partial\Omega$.

We can now apply standard weak maximum principle (see for instance Proposition 5.1 of \cite{ponce}) and we get that $w^{+}=0$. This concludes the proof of the result. 
\end{proof}

\begin{remark}
We stress again that uniqueness results for singular problems as the one we considered are very hard to prove in general. Some results in this direction can be found in the recent paper \cite{bcd} (see also \cite{olpe}). 
A trivial remark is that the previous proof works also for $\gamma<1$ in the class of $H^{1}_{\rm loc}$ functions: that is, if $\gamma<1$, then $H^{1}_{\rm loc}$ solutions to problem \rife{unica} (if they exist) are unique. 
\end{remark}

%
%


\begin{thebibliography}{00}

\bibitem{bp} P. Baras and M. Pierre, \emph{Singularit\'es \'eliminables pour des \'equations semi-lin\'eaires}, Ann. Inst. Fourier (Grenoble), {34} (1984), 185--206.
\bibitem{bo97} L. Boccardo, \emph{Quelques probl\`emes de Dirichlet avec donn\'ees dans de grands espaces de Sobolev}(French) [ \emph{Some Dirichlet problems with data in large Sobolev spaces}], C. R. Acad. Sci. Paris S\'er. I Math., 325, (1997) n. 12 , 1269--1272.
\bibitem{bcd} L. Boccardo, J. Casado-D\'iaz, \emph{Some properties of solutions of some semilinear elliptic singular problems and applications to the G-convergence}, Asymptotic Analysis, 86(1) (2014), 1--15 
\bibitem{bgo} L. Boccardo, T. Gallou\"et, L. Orsina, \emph{Existence and uniqueness of entropy
solutions for nonlinear elliptic equations with measure data}, Ann. Inst. H. Poincar\'e Anal. Non Lin\'eaire,
 {13} (1996), 539--551.
\bibitem{bo}L. Boccardo and L. Orsina, \emph{Semilinear elliptic equations with singular nonlinearities}, Calc. Var. and PDEs, {37} (3-4) (2010), 363--380.

\bibitem{bmp} H. Brezis, M. Marcus, A. C. Ponce, \emph{Nonlinear elliptic equations with measures revisited. Mathematical aspects of nonlinear dispersive equations}, 55--109, Ann. of Math. Stud., 163, Princeton Univ. Press, Princeton, NJ, 2007.
\bibitem{crt} M. G. Crandall, P. H. Rabinowitz, L. Tartar, \emph{On a dirichlet problem with a singular nonlinearity}, Comm. Part. Diff. Eq., 2 (1977), 193--222.

\bibitem{dm} G. Dal Maso, \emph{On the integral representation of certain local functionals}, Ricerche Mat. {32}, no. 1 (1983), 85--113.

\bibitem{dmop} G. Dal Maso, F. Murat, L. Orsina, A. Prignet, \emph{Renormalized
 solutions of elliptic equations with general measure
data},\, Ann. Scuola Norm. Sup. Pisa Cl. Sci., {28} (1999), 741--808.
\bibitem{hkm} J. Heinonen, T. Kilpel\"ainen, O. Martio, {Nonlinear potential theory of degenerate elliptic equations}, Oxford University Press, Oxford, 1993.

\bibitem{lm} A. C. Lazer and P. J. McKenna, \emph{On a singular nonlinear elliptic boundary-value problem}, Proc. Amer. Math. Soc., {111} (3) (1991), 721--730. 

\bibitem{mp} F. Murat, A. Porretta, \emph{Stability properties, existence, and nonexistence of renormalized solutions for elliptic equations with measure data}, Comm. Part. Diff. Eq., {27} (11--12) (2003), 2267--2310.

\bibitem{nc} A. Nachman and A. Callegari, \emph{A nonlinear singular boundary value problem in the theory of pseudoplastic fluids}, SIAM J. Appl. Math., {28} (1986), 271--281.

\bibitem{olpe} F. Oliva, F. Petitta, \emph{On singular elliptic equations with measure sources}, {ESAIM Control Optim. Calc. Var.}, in press

\bibitem{orpr}L. Orsina, A, Prignet. \emph{Non-existence of solutions for some nonlinear elliptic equations involving measures}, Proc. Roy. Soc. Edinburgh Sect. A, {130} (2000), 167--187

\bibitem{ponce} A. C. Ponce, \emph{Selected problems on elliptic equations involving measures}, (2012), arXiv:1204.0668v1

\bibitem{yd} S. Yijing, Z. Duanzhi, 
\emph{The role of the power 3 for elliptic equations with negative exponents}, 
Calc. Var. and PDEs 49 (2014), no. 3-4, 909--922. 
\end{thebibliography}
\end{document}